\definecolor{orange}{rgb}{1,0.5,0}
\newtheorem{theorem}{Theorem}
\newtheorem{lemma}{Lemma}
\newtheorem{corollary}{Corollary}
\newtheorem{definition}{Definition}
\newcommand{\eps}{\varepsilon}
\renewcommand{\phi}{\varphi}
\newcommand{\ds}{\, ds}
\newcommand{\dr}{\, dr}
\newcommand{\dt}{\, dt}
\newcommand{\dint}{\displaystyle \int}
\newcommand{\cF}{\mathcal F}
\newcommand{\cV}{\mathcal V}
\newcommand{\cR}{\mathcal R}
\DeclareMathOperator{\codim}{codim}
\DeclareMathOperator{\imagem}{Im}
\DeclareMathOperator{\Id}{Id}
\DeclareMathOperator{\e}{e}
   \newcommand{\R}{\ensuremath{\mathds R}}
\begin{document}
\title[Existence of periodic solutions...]
   {Existence of periodic solutions of a periodic SEIRS model with general incidence}
\author{Joaquim P. Mateus}
\address{J. Mateus\\
   Instituto Polit\'ecnico da Guarda\\
   6300-559 Guarda\\
   Portugal}
\email{jmateus@ipg.pt}
\author{C\'esar M. Silva}
\address{C. Silva\\
   Departamento de Matem\'atica\\
   Universidade da Beira Interior\\
   6201-001 Covilh\~a\\
   Portugal}
\email{csilva@mat.ubi.pt}
\urladdr{www.mat.ubi.pt/~csilva}
\date{\today}
\thanks{C. Silva was partially supported by FCT through CMUBI (project UID/MAT/00212/2013) and J. Mateus was partially supported by FCT through UDI (project PEst-OE/EGE/UI4056/2014)}
\subjclass[2010]{92D30, 37B55} \keywords{Epidemic model,
periodic, stability}
\begin{abstract} For a family of periodic SEIRS models with general incidence, we prove the existence of at least one endemic periodic orbit when $\cR_0>1$. Additionally, we prove the existence of a unique disease-free periodic orbit, that is globally asymptotically stable when $\mathcal R_0<1$. In particular, our main result establishes a sharp threshold between existence and non-existence of endemic periodic orbits for this family of models.
\end{abstract}
\maketitle
\section{Introduction}

In the sequence of the model introduced by Li and Muldowney in~\cite{Li-Muldowney-MB-1995}, several works were devoted to the study of epidemic models with a latent class. In these models, besides the infected, susceptible and recovered compartments, an exposed compartment is also considered in order to split the infected population in two groups: the individuals that are infected and can infect others (the infective class) and the individuals that are infected but are not yet able to infect others (the exposed or latent class). This division makes the model particularity suitable to include several infectious diseases like measles and, assuming vertical transmission, rubella~\cite{Li-Smith-Wang-SJAM-2001}. Additionally, if there is no recovery, the model is appropriate to describe diseases such as Chagas' disease~\cite{Driessche-ME-2008}. This model can also be used to model diseases like hepatitis B and AIDS~\cite{Li-Smith-Wang-SJAM-2001}. Even influenza can be modeled by a SEIRS model~\cite{Cori-Valleron-E-2012}, although, due to the short latency period, it is sometimes more convenient to use the simpler SIRS formulation~\cite{Edlund-E-2011}. Mathematically, the existence of more than one infected compartment brings some additional challenges to the study of the model.

In this work we focus on the existence and stability of endemic periodic solutions of a large family of periodic SEIRS models contained in the family of models already considered in~\cite{Mateus-Silva-AMC-2014}. Namely, we will consider models of the form
\begin{equation}\label{eq:ProblemaPrincipal}
\begin{cases}
S'=\Lambda(t)-\beta(t)\,\phi(S,N,I)-\mu(t) S+\eta(t)R\\
E'=\beta(t)\,\phi(S,N,I)-(\mu(t)+\epsilon(t))E\\
I'=\epsilon(t)E -(\mu(t)+\gamma(t))I \\
R'=\gamma(t)I-(\mu(t)+\eta(t))R \\
N=S+E+I+R
\end{cases}
\end{equation}
where $S$, $E$, $I$, $R$ denote respectively the susceptible, exposed (infected but not infective), infective and recovered compartments and $N$ is the total population, $\Lambda(t)$ denotes the birth rate, $\beta(t)\,\phi(S,N,I)$ is the incidence into the exposed class of susceptible individuals, $\mu(t)$ are the natural deaths, $\eta(t)$ represents the rate of loss of immunity, $\epsilon(t)$ represents the infectivity rate and $\gamma(t)$ is the rate of recovery. We assume that $\Lambda$, $\beta$, $\mu$, $\eta$, $\epsilon$ and $\gamma$ are periodic functions of the same period $\omega$.

Several different incidence functions have been considered to model the transmission in the context of SEIR/SEIRS models. In particular Michaelis-Menten incidence functions, that include the usual simple and standard incidence functions, have the form $\beta(t)\varphi(S,N,I)=\beta(t) C(N)SI/N$ and were considered, just to name a few references, in~\cite{Wang-Derrick-BIMAS-1978, Bai-Zhou-NARWA-2012, Gao-Chen-Teng-NARWA-2008, Nakata-Kuniya-JMAA-2010, Kuniya-Nakata-AMC-2012, Zhang-Teng-BMB-2007}. The assumption that the incidence function is bilinear is seldom too simple and it is necessary to consider some saturation effect as well as other non-linear behaviors~\cite{Liu-Hethcote-Levin-JMB-1987, Zhou-Xiao-Li-CSF-2007}.
The Holling Type II incidence, given by $\beta(t)\varphi(S,N,I)=\beta(t) SI/(1+\alpha I)$, is an example of an incidence function with saturation effect and was considered for instance in~\cite{Safi-Garba-CMMM-2012, Zhang-Teng-CSF-2009}. Another popular type of incidence, given by $\beta(t)\varphi(S,N,I)=\beta(t) I^pS^q$, was considered in~\cite{Korobeinikov-Maini-MBE-2004, Liu-Hethcote-Levin-JMB-1987, Hethcote-Lewis-Driessche-JMB-1989}. Also, a generalization of Holling Type II incidence,  $\beta(t)\varphi(S,N,I)=\beta(t) S I^p/(1+\alpha I^q)$, was considered in~\cite{Driessche-Hethcote-JMB-1991, Ruan-Wang-JDE-2003}. All these incidence functions satisfy our hypothesis (see~P\ref{Cond-P0}) to~P\ref{Cond-P5}) in Section~\ref{section:NP}).

The search for periodic solutions and the study of their stability is a very important subject in epidemiology. In fact, in the non-autonomous context, periodic solutions play the same role as equilibriums in the autonomous context. Our main result shows that there exists a positive periodic solution of~\eqref{eq:ProblemaPrincipal} whenever $\cR_0>1$ and the determinant of some matrix is not zero, a technical condition required by our method of prove that consists in applying the famous Mawhin continuation theorem. We also prove that, when $\cR_0<1$, there exists a unique disease-free periodic solution that is globally asymptotically stable. Here, $\cR_0$ is given by the spectral radius of some operator, obtained by the method developed in~\cite{Wang-Zhao-JDDE-2008}. We note that in several situations, including mass-action incidence and more generally Michaelis--Menten incidence, we can show that the only condition for the existence of periodic orbit is $\cR_0>1$. We emphasise that, for those incidence functions, $\cR_0$ is a sharp threshold between existence of a (unique and globally attractive) disease-free periodic solution and the co-existence of this disease-free periodic solution with (at least) one endemic periodic solution. To obtain this sharp result, it is fundamental to have a sharp result about persistence of the infectives. Fortunately, in~\cite{Rebelo-Margheri-Bacaer-JMB-2012} such result is obtained for general epidemiological models and applied to a mass-action SEIRS model. We use this result to obtain persistence in our general incidence case.

For mass-action incidence, in~\cite{Zhang-Liu-Teng-AM-2012}, it is discussed the existence of periodic orbits. It is shown there that, under some condition involving bounds for the periodic parameters, there exists at least a positive periodic orbit. The referred model differs from ours not only because it assumes a particular form for the incidence function, but also because it allows disease induced mortality and it assumes that immunity is permanent. When the disease induced mortality is set to zero (letting $\alpha\equiv 0$), that model becomes a particular case of ours. Thus, when there is no disease induced mortality, Corollary~\ref{cor:simple} in Section~\ref{section:PersistenceExistence} improves the main result in~\cite{Zhang-Liu-Teng-AM-2012}.

Although the idea of applying Mawhin's continuation theorem was borrowed from~\cite{Zhang-Liu-Teng-AM-2012}, we need several nontrivial new arguments to deal with our case. In particular, because we allow temporary immunity, we were forced to use the original four-dimensional system instead of a reduced system.
\section{Notation and Preliminaries}\label{section:NP}

In this section we will establish the assumptions on model~\eqref{eq:ProblemaPrincipal} and state some results on threshold type conditions obtained in~\cite{Mateus-Silva-AMC-2014} for this model.

Given an $\omega$-periodic function $f:\R_0^+\to \R$, we define $f^u=\max_{t \in [0,\omega]} f(t)$ and $f^\ell=\min_{t \in [0,\omega]} f(t)$. We will make the following assumptions:
\begin{enumerate}[P1)]
\item \label{Cond-P0} There is $\omega \ge 0$ such that $\Lambda$, $\mu$, $\beta$ and $\epsilon$ are continuous, bounded and positive $\omega$-periodic real valued functions on $\R_0^+$ and that $\eta$ and $\gamma$ are continuous, bounded and non-negative $\omega$-periodic real valued functions on $\R_0^+$;
\item \label{Cond-P1} Function $\varphi:(\R_0^+)^3 \to \R$ is continuously differentiable;
\item \label{Cond-P2} For $S,N,I \ge 0$ we have $\varphi(0,N,I)=\varphi(S,N,0)=0$;
\item \label{Cond-P3} For $S,I>0$ and $N \in \left[\Lambda^\ell/\mu^u,\Lambda^u/\mu^\ell\right]$ we have
    $c_1 \le \varphi(S,N,I)/(SI) \le c_2$;
\item \label{Cond-P4} For $0\le I\le N\le\Lambda^u/\mu^\ell$, the function $\R^+_0 \ni S \mapsto \varphi(S,N,I)$ is non-decreasing, for $0\le S \le N\le\Lambda^u/\mu^\ell$, the function $\R^+_0 \ni I \mapsto \varphi(S,N,I)$ is non-decreasing and for $0\le S,I\le N \le \Lambda^u/\mu^\ell$ the function $\R^+_0 \ni N \mapsto \varphi(S,N,I)$ is non-increasing;
\item \label{Cond-P5} For $0\le S\le N\le\Lambda^u/\mu^\ell$, the function $\R^+ \ni I \mapsto \varphi(S,N,I)/I$ is non-increasing.
\end{enumerate}

We will consider in our periodic setting the periodic linear differential equation
\begin{equation}\label{eq:SistemaAuxiliar-periodic}
z'=\Lambda(t)-\mu(t)z.
\end{equation}
We have the following proposition:

\begin{lemma}\label{lemma:exist-per-sol-DF}
Assume that condition P\ref{Cond-P0}) holds. Then we have the following:
\begin{enumerate}[1)]
    \item \label{cond-1-aux} Given $t_0 \ge 0$, all solutions $z$ of equation~\eqref{eq:SistemaAuxiliar-periodic} with initial condition $z(t_0) \ge 0$ are nonnegative for all $t \ge 0$;
    \item \label{cond-1a-aux} Given $t_0 \ge 0$, all solutions $z$ of equation~\eqref{eq:SistemaAuxiliar-periodic} with initial condition $z(t_0) > 0$ are positive for all $t \ge 0$;
    \item \label{cond-2-aux} Given any two solutions $z,z_1$ of~\eqref{eq:SistemaAuxiliar-periodic} we have $|z(t)-z_1(t)| \to 0$ as $t \to +\infty$;
    \item \label{lemma:exist-per-sol-DF-1} For each solution $z(t)$ of~\eqref{eq:SistemaAuxiliar-periodic} we have
      $$\Lambda^\ell/\mu^u \le \liminf_{t \to +\infty} z(t) \le \limsup_{t \to +\infty} z(t) \le \Lambda^u/\mu^\ell;$$
    \item \label{lemma:exist-per-sol-DF-2} For each solution $z(t)$ of~\eqref{eq:SistemaAuxiliar-periodic} with initial condition in $[\Lambda^\ell/\mu^u, \Lambda^u/\mu^\ell]$ we have $z(t) \in [\Lambda^\ell/\mu^u, \Lambda^u/\mu^\ell]$, for all $t \ge t_0$;
    \item \label{lemma:exist-per-sol-DF-3} There is a unique periodic solution $z^*(t)$ of~\eqref{eq:SistemaAuxiliar-periodic} in $\R^+$, this solution has period $\omega$ and is given by
      \begin{equation}\label{eq:aux-sist-per-sol}
      z^*(t)=\dfrac{\int_0^\omega \Lambda(u)\e^{-\int_u^\omega \mu(s) \ ds} \ du}{1-\e^{-\int_0^\omega \mu(s)\ds}}\e^{-\int_0^t \mu(s)\ds}+\int_0^t \Lambda(u)\e^{-\int_u^t \mu(s) \ ds} \ du.
      \end{equation}
\end{enumerate}
\end{lemma}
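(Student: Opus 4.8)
The plan is to base the whole argument on the explicit representation of solutions furnished by the integrating-factor method, since~\eqref{eq:SistemaAuxiliar-periodic} is a scalar linear equation. Solving with initial condition $z(t_0)=z_0$ gives
$$z(t)=z_0\,\e^{-\int_{t_0}^t \mu(s)\ds}+\int_{t_0}^t \Lambda(u)\,\e^{-\int_u^t \mu(s)\ds}\du,$$
and every item will be read off from this formula together with the elementary bounds $0<\Lambda^\ell\le\Lambda(u)\le\Lambda^u$ and $0<\mu^\ell\le\mu(s)\le\mu^u$ guaranteed by P\ref{Cond-P0}).

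For items 1) and 2) I would note that both exponentials are strictly positive and $\Lambda(u)>0$, so the integral term is nonnegative (strictly positive for $t>t_0$) while the first term carries the sign of $z_0$; hence $z_0\ge0$ forces $z(t)\ge0$ and $z_0>0$ forces $z(t)>0$ for $t\ge t_0$, and the same sign persists backward because at $z=0$ one has $z'=\Lambda>0$, so the field points into the nonnegative half-line. Item 3) is immediate: the difference $w=z-z_1$ solves the homogeneous equation $w'=-\mu(t)w$, whence $|w(t)|=|w(t_0)|\,\e^{-\int_{t_0}^t\mu(s)\ds}$, and since $\int_{t_0}^t\mu(s)\ds\ge\mu^\ell(t-t_0)\to+\infty$ this tends to $0$.

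For item 4) I would estimate the two terms separately. The homogeneous term satisfies $0\le z_0\,\e^{-\int_{t_0}^t\mu(s)\ds}\le z_0\,\e^{-\mu^\ell(t-t_0)}\to0$, while replacing $\Lambda$ and $\mu$ by their extremal constants in the integral term yields
$$\Lambda^\ell\,\frac{1-\e^{-\mu^u(t-t_0)}}{\mu^u}\le \int_{t_0}^t \Lambda(u)\,\e^{-\int_u^t\mu(s)\ds}\du\le \Lambda^u\,\frac{1-\e^{-\mu^\ell(t-t_0)}}{\mu^\ell},$$
and passing to the limit gives $\Lambda^\ell/\mu^u\le\liminf z\le\limsup z\le\Lambda^u/\mu^\ell$. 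Item 5) is an invariance statement I would prove straight from the equation: at the right endpoint $z=\Lambda^u/\mu^\ell$ one has $z'=\Lambda(t)-\mu(t)\Lambda^u/\mu^\ell\le\Lambda^u-\mu^\ell\Lambda^u/\mu^\ell=0$, and at the left endpoint $z=\Lambda^\ell/\mu^u$ one has $z'\ge\Lambda^\ell-\mu^u\Lambda^\ell/\mu^u=0$, so the flow cannot leave $[\Lambda^\ell/\mu^u,\Lambda^u/\mu^\ell]$ and an initial condition there remains there for all $t\ge t_0$.

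Finally, for item 6) I would reduce the existence of an $\omega$-periodic solution to a fixed point of the time-$\omega$ Poincar\'e map. Taking $t_0=0$, the map $z_0\mapsto z(\omega;z_0)$ is affine with multiplier $\e^{-\int_0^\omega\mu(s)\ds}$, which is strictly less than $1$ because $\int_0^\omega\mu(s)\ds\ge\mu^\ell\omega>0$; hence it is a contraction with a unique fixed point
$$z_0^*=\frac{\int_0^\omega \Lambda(u)\,\e^{-\int_u^\omega\mu(s)\ds}\du}{1-\e^{-\int_0^\omega\mu(s)\ds}}.$$
Substituting $z_0^*$ into the solution formula reproduces exactly~\eqref{eq:aux-sist-per-sol}: $\omega$-periodicity follows from the fixed-point property and uniqueness for the initial value problem, positivity from item 2), and uniqueness of the periodic orbit from uniqueness of the fixed point. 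I do not anticipate any genuine obstacle, since the content is elementary once the explicit formula is in hand; the only point needing mild care is tracking the two exponential integrals with the correct limits of integration when verifying that the substitution yields~\eqref{eq:aux-sist-per-sol}.
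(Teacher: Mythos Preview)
Your proposal is correct and follows the same core approach as the paper: both base everything on the integrating-factor formula, and items 1)--4) are handled essentially identically. There are two small tactical differences worth noting. For item 5), you argue invariance by checking the sign of $z'$ at the two endpoints of the interval, whereas the paper reuses the explicit two-sided estimate it obtained for item 4) (namely $z(t)\le(z_0-\Lambda^u/\mu^\ell)\e^{-\mu^\ell t}+\Lambda^u/\mu^\ell$ and the analogous lower bound) and observes that the transient term has the right sign when $z_0$ lies in the interval. For item 6), you exploit that the time-$\omega$ map is affine with multiplier $\e^{-\int_0^\omega\mu}<1$, so the contraction gives existence and uniqueness of the fixed point in one stroke; the paper instead invokes Brouwer's fixed point theorem on the invariant interval for existence and then appeals to item 3) for uniqueness. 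Your route is a bit more economical on both counts, but the content is the same.
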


\begin{proof}
By the variation of the parameters formula we have that the unique solution of~\eqref{eq:SistemaAuxiliar-periodic} with $z(0)=z_0$ is
\begin{equation}\label{eq:var-consta}
z(t,z_0)=z_0\e^{-\int_0^t \mu(s)\ds}+\int_0^t \Lambda(u)\e^{-\int_u^t \mu(s) \ ds} \ du.
\end{equation}
and thus we immediately conclude that, if $z_0\ge0$, then $z(t,z_0)\ge0$ for all $t\ge 0$ and similarly that, if $z_0>0$, then $z(t,z_0)>0$ for all $t\ge 0$. Thus,~\ref{cond-1-aux}) and \ref{cond-1a-aux}) hold.

Letting $z$ and $z_1$ be solutions of~\eqref{eq:SistemaAuxiliar-periodic} with $z(t_0)=z_0$ and $z_1(t_0)=z_{0,1}$, by~\eqref{eq:var-consta} and~P\ref{Cond-P0}), there is $t_0>0$ such that, for $t \ge t_0$ we have
	\[
   |z(t)-z_1(t)| = \e^{-\int_{t_0}^t\mu(s)\ds} |z_0-z_{0,1}| \le \e^{-\mu^\ell (t-t_0)} |z_0-z_{0,1}|
   \]
and thus $|z(t)-z_1(t)|\to 0$ as $t \to +\infty$ and we obtain~\ref{cond-2-aux}).

To obtain bounds for the solutions we note that
\begin{equation}\label{eq:maj-sist-aux-1}
z(t,z_0) \le z_0 \e^{-\mu^\ell t} + \Lambda^u \int_0^t \e^{-\mu^\ell(t-u)} \ du = \left(z_0-\Lambda^u/\mu^\ell \right) \e^{-\mu^\ell t} + \Lambda^u/\mu^\ell
\end{equation}
and thus $\displaystyle \limsup_{t \to +\infty} z(t,z_0) \le \Lambda^u/\mu^\ell$. Similarly,
\begin{equation}\label{eq:maj-sist-aux-2}
z(t,z_0) \ge z_0 \e^{-\mu^u t} + \Lambda^\ell \int_0^t \e^{-\mu^u(t-u)} \ du = \left(z_0-\Lambda^\ell/\mu^u \right) \e^{-\mu^u t} + \Lambda^\ell/\mu^u
\end{equation}
and thus $\displaystyle \liminf_{t \to +\infty} z(t,z_0) \ge \Lambda^\ell/\mu^u$. We obtain~\ref{lemma:exist-per-sol-DF-1}).

Let $z_0 \in [\Lambda^\ell/\mu^u, \Lambda^u/\mu^\ell]$. Since $z_0-\Lambda^u/\mu^\ell\le 0$ and $z_0-\Lambda^\ell/\mu^u\ge 0$, by~\eqref{eq:maj-sist-aux-1} and~\eqref{eq:maj-sist-aux-2} we obtain~\ref{lemma:exist-per-sol-DF-2}).

By the invariance of $[\Lambda^\ell/\mu^u, \Lambda^u/\mu^\ell]$ established in~\ref{lemma:exist-per-sol-DF-2}), the map
\[
\begin{array}{cccc}
P: & [\Lambda^\ell/\mu^u, \Lambda^u/\mu^\ell] & \to & [\Lambda^\ell/\mu^u, \Lambda^u/\mu^\ell]\\
 & y & \to & z(\omega,y)
\end{array},
\]
where $z(t,y)$ denotes the unique solution of~\eqref{eq:SistemaAuxiliar-periodic} with initial condition $z(0)=y$, is well defined. Since $P$ is a continuous function on the convex and compact set $[\Lambda^\ell/\mu^u, \Lambda^u/\mu^\ell]$, by Brower's fixed point theorem, we conclude that $P$ has a fixed point $y_0$. Thus $z(\omega,y_0)=y_0$. By uniqueness of solution we have
\begin{equation}\label{eq:aux-sist-uniqueness-per-sol}
z(\omega+t,y_0)=z(t,z(\omega,y_0))=z(t,y_0)
\end{equation}
and we can conclude that the solution of~\eqref{eq:SistemaAuxiliar-periodic} with $z(0)=y_0$ is a $\omega$-periodic solution. Moreover, by~\eqref{eq:aux-sist-uniqueness-per-sol} and the variation of the parameters formula, we obtain
$$y_0=y_0\e^{-\int_0^\omega \mu(s)\ds}+\int_0^\omega \Lambda(u)\e^{-\int_u^\omega \mu(s) \ ds} \ du$$
and thus
\[
y_0=\dfrac{\int_0^\omega \Lambda(u)\e^{-\int_u^\omega \mu(s) \ ds} \ du}{1-\e^{-\int_0^\omega \mu(s)\ds}}
\]
and we get~\eqref{eq:aux-sist-per-sol}. The uniqueness of the periodic solution follows from the global asymptotic stability of solutions proved in~\ref{cond-2-aux}). We obtain~\ref{lemma:exist-per-sol-DF-3}).
\end{proof}

We now obtain some simple properties of system~\eqref{eq:ProblemaPrincipal}.
\begin{lemma}\label{lemma:invariant-region-periodic}
Assume that conditions P\ref{Cond-P0}) to P\ref{Cond-P5}) hold. Then:
\begin{enumerate}[1)]
\item \label{cond-1-bs} All solutions $(S(t),E(t),I(t),R(t))$ of~\eqref{eq:ProblemaPrincipal}
with nonnegative initial conditions,
$S(0),E(0),I(0),R(0) \ge 0$,
are nonnegative for all $t \ge 0$;
\item \label{cond-2-bs} All solutions $(S(t),E(t),I(t),R(t))$ of~\eqref{eq:ProblemaPrincipal}
with positive initial conditions, $S(0)$, $E(0)$, $I(0)$, $R(0)>0$, are positive for all $t \ge 0$;
\item \label{cond-a-irp} If $(S(t),E(t),I(t),R(t))$ is a periodic solution of~\eqref{eq:ProblemaPrincipal} verifying $S(t_0)$, $E(t_0)$, $I(t_0)$, $R(t_0) \ge 0$, then we have
$\Lambda^\ell/\mu^u \le N(t) \le \Lambda^u/\mu^\ell$.
\item \label{cond-2-irp} For any $\delta>0$, and every solution $(S(t),E(t),I(t),R(t))$, there is $T_\delta>0$ such that $(S(t),E(t),I(t),R(t))$ belongs to the set
    $$\left\{(S,E,I,R) \in (\R_0^+)^4: \Lambda^\ell / \mu^u-\delta \le S+E+I+R \le \Lambda^u/ \mu^\ell+\delta \right\},$$
    for all $t\ge T_\delta$.
\end{enumerate}
\end{lemma}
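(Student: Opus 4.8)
The plan is to exploit the crucial observation that summing the four equations of~\eqref{eq:ProblemaPrincipal} yields $N'=\Lambda(t)-\mu(t)N$, so that $N=S+E+I+R$ is itself a solution of the auxiliary equation~\eqref{eq:SistemaAuxiliar-periodic}. This lets me transfer all the quantitative information about $N$ directly from Lemma~\ref{lemma:exist-per-sol-DF}. I will also repeatedly use that $\phi\ge 0$ on the nonnegative region, which follows at once from~P\ref{Cond-P2}) and the monotonicity in~P\ref{Cond-P4}), since $\phi(S,N,I)\ge\phi(0,N,I)=0$.

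For~\ref{cond-1-bs}) I would establish positive invariance of the nonnegative orthant $(\R_0^+)^4$ by checking a tangency (Nagumo-type) condition on each of its faces. On $\set{S=0}$, condition~P\ref{Cond-P2}) gives $\phi(0,N,I)=0$, whence $S'=\Lambda(t)+\eta(t)R\ge\Lambda^\ell>0$; on $\set{E=0}$ we get $E'=\beta(t)\phi(S,N,I)\ge0$; on $\set{I=0}$ we get $I'=\epsilon(t)E\ge0$; and on $\set{R=0}$ we get $R'=\gamma(t)I\ge0$. Since on each bounding hyperplane the vector field points into the orthant whenever the remaining coordinates are nonnegative, no trajectory starting in $(\R_0^+)^4$ can leave it, which is~\ref{cond-1-bs}). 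Global existence of the solution is guaranteed because $0\le S,E,I,R\le N$ and $N$ stays bounded through the linear equation $N'=\Lambda-\mu N$.

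For~\ref{cond-2-bs}) I would combine the nonnegativity just obtained with integrating-factor lower bounds. Each of the equations for $E$, $I$, $R$ takes the form $x'\ge-a(t)x$ with $a$ bounded, so for instance $I(t)\ge I(0)\e^{-\int_0^t(\mu(s)+\gamma(s))\ds}>0$, and analogously for $E$ and $R$. For $S$ the incidence term is nonlinear, so the one technical point is to dominate it by a multiple of $S$: since $\phi$ is $C^1$ by~P\ref{Cond-P1}) and vanishes at $S=0$ by~P\ref{Cond-P2}), on any finite interval $[0,T]$ the mean value theorem gives $\phi(S,N,I)\le M\,S$, where $M$ bounds $\partial_S\phi$ over the compact region traversed; this yields $S'\ge-(\beta^u M+\mu^u)S$ and hence $S(t)\ge S(0)\e^{-(\beta^u M+\mu^u)t}>0$, and letting $T\to+\infty$ gives positivity of $S$ for all $t\ge0$.

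Finally, for~\ref{cond-a-irp}) and~\ref{cond-2-irp}) I would invoke Lemma~\ref{lemma:exist-per-sol-DF} through the reduction $N'=\Lambda-\mu N$. If the solution is periodic, then $N$ is a nonnegative periodic solution of~\eqref{eq:SistemaAuxiliar-periodic}, hence equals $z^*$ by part~\ref{lemma:exist-per-sol-DF-3}); since a periodic function attains its $\liminf$ and $\limsup$, part~\ref{lemma:exist-per-sol-DF-1}) forces $\Lambda^\ell/\mu^u\le N(t)\le\Lambda^u/\mu^\ell$ for every $t$, which is~\ref{cond-a-irp}). For a general solution, part~\ref{lemma:exist-per-sol-DF-1}) yields $\Lambda^\ell/\mu^u\le\liminf_{t\to+\infty}N(t)\le\limsup_{t\to+\infty}N(t)\le\Lambda^u/\mu^\ell$, so by the very definition of $\liminf$ and $\limsup$, for each $\delta>0$ there is $T_\delta>0$ with $N(t)\in[\Lambda^\ell/\mu^u-\delta,\Lambda^u/\mu^\ell+\delta]$ for all $t\ge T_\delta$; together with~\ref{cond-1-bs}) this is exactly~\ref{cond-2-irp}). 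I expect the positivity of $S$ in~\ref{cond-2-bs}) to be the only genuinely delicate step, precisely because of the nonlinear incidence; the remaining parts become routine once the identity $N'=\Lambda-\mu N$ puts Lemma~\ref{lemma:exist-per-sol-DF} at my disposal.
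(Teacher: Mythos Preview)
Your proposal is correct and follows essentially the same route as the paper. The paper's own proof is only a two-line sketch: it invokes ``a simple analysis of the flow on the boundary of $(\R_0^+)^4$'' for \ref{cond-1-bs}) and \ref{cond-2-bs}), and then sums the equations to obtain $N'=\Lambda(t)-\mu(t)N$ and appeals to Lemma~\ref{lemma:exist-per-sol-DF} for \ref{cond-a-irp}) and \ref{cond-2-irp}); you have simply supplied the details the paper omits, in particular the integrating-factor argument for strict positivity and the handling of the nonlinear incidence in the $S$-equation.
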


\begin{proof}
A simple analysis of the flow on the boundary of $(\R_0^+)^4$ allows one to conclude that~\ref{cond-1-bs}) and~\ref{cond-2-bs}) hold.
To obtain the remaining conditions we note that, adding the differential equations in~\eqref{eq:ProblemaPrincipal} we get the equation $N'=\Lambda(t)-\mu(t)N$. By Lemma~\ref{lemma:exist-per-sol-DF}, we easily obtain~\ref{cond-a-irp}) and \ref{cond-2-irp}).
\end{proof}

By P\ref{Cond-P0}) and P\ref{Cond-P1}), the right end side of our system is continuous and locally Lipschitz and thus, by Picard--Lindel\"of's theorem we have existence and uniqueness of (local) solution. By~\ref{cond-2-irp}) in Lemma~\ref{lemma:invariant-region-periodic}, every solution is global in the future.

\section{Existence and stability of disease-free periodic orbits}

\begin{theorem}\label{eq:DFperiodicSol}
Assume that conditions P\ref{Cond-P0}) to P\ref{Cond-P5}) hold. Then system~\eqref{eq:ProblemaPrincipal} admits a unique disease-free periodic solution given by $x^*=(S^*(t),0,0,0)$, where $S^*$ is the unique periodic solution of~\eqref{eq:SistemaAuxiliar-periodic}. This solution has period $\omega$.
\end{theorem}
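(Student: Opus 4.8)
The plan is to split the statement into an existence part and a uniqueness part, treating $(S^*(t),0,0,0)$ as a candidate and then showing it is the only disease-free periodic orbit.

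\emph{Existence.} First I would substitute $E\equiv I\equiv R\equiv 0$ directly into~\eqref{eq:ProblemaPrincipal}. By P\ref{Cond-P2}) we have $\varphi(S,N,0)=0$, so the first equation reduces to $S'=\Lambda(t)-\mu(t)S$, which is exactly~\eqref{eq:SistemaAuxiliar-periodic}, while the equations for $E$, $I$ and $R$ all collapse to $0=0$. Choosing $S=S^*$, the unique periodic solution of~\eqref{eq:SistemaAuxiliar-periodic} furnished by part~\ref{lemma:exist-per-sol-DF-3}) of Lemma~\ref{lemma:exist-per-sol-DF}, therefore produces a bona fide solution of~\eqref{eq:ProblemaPrincipal} of the form $(S^*,0,0,0)$, and it is $\omega$-periodic because $S^*$ is.

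\emph{Uniqueness.} Let $(S,E,I,R)$ be any disease-free periodic solution, i.e.\ one with $I\equiv 0$. Then $I'\equiv 0$, and the third equation of~\eqref{eq:ProblemaPrincipal} gives $\epsilon(t)E(t)=0$ for every $t$; since $\epsilon$ is positive by P\ref{Cond-P0}), this forces $E\equiv 0$. With $E\equiv I\equiv 0$ the fourth equation becomes the scalar linear equation $R'=-(\mu(t)+\eta(t))R$, whose period map multiplies the initial value by $\exp\left(-\int_0^\omega(\mu(s)+\eta(s))\ds\right)<1$ (strictly, because $\mu^\ell>0$); hence the only $\omega$-periodic solution is $R\equiv 0$. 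Finally, once $E\equiv I\equiv R\equiv 0$ we have $N=S$ and $\varphi(S,N,0)=0$, so $S$ again solves~\eqref{eq:SistemaAuxiliar-periodic}, and part~\ref{lemma:exist-per-sol-DF-3}) of Lemma~\ref{lemma:exist-per-sol-DF} yields $S=S^*$.

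The existence half is a routine verification; the only genuinely nonroutine point is the argument that $R\equiv 0$, which relies essentially on the strict positivity of the death rate, $\mu^\ell>0$, to guarantee that the homogeneous equation $R'=-(\mu+\eta)R$ admits no nontrivial periodic solution. I expect this to be the main (if modest) obstacle: without a positive dissipation rate one could not exclude a nonzero periodic recovered class decoupled from the infection, and the uniqueness claim would fail.
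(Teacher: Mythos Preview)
Your proof is correct and follows essentially the same approach as the paper: reduce to the scalar equation~\eqref{eq:SistemaAuxiliar-periodic} for $S$, invoke Lemma~\ref{lemma:exist-per-sol-DF}, and rule out nontrivial periodic $R$ via the homogeneous equation $R'=-(\mu+\eta)R$. The paper's version is terser---it writes the general solution $R(t)=C\e^{-\int_0^t(\mu+\eta)\ds}$ and observes $C=0$ is forced, whereas you phrase the same fact via the period map---and it implicitly takes ``disease-free'' to mean $E\equiv I\equiv 0$ rather than deriving $E\equiv 0$ from $I\equiv 0$ via the third equation as you do; your treatment is slightly more careful on that point.
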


\begin{proof}
By Lemma~\ref{lemma:exist-per-sol-DF}, equation
$$S'=\Lambda(t)-\mu(t)S$$
with initial condition $S(0)=S_0>0$ admits a unique positive periodic solution $S^*(t)$, which is globally attractive. Since $R'=-(\mu(t)+\eta(t))R$ has general solution $R(t)=C\e^{-\int_0^t \mu(s)+\eta(s) \ ds}$, we conclude that for any periodic solution we must have $C=0$. Thus system~\eqref{eq:ProblemaPrincipal} admits an unique disease-free periodic solution given by $(S^*(t),0,0,0)$.
Since $S^*(t)$ is $\omega$-periodic, it follows that $(S^*(t),0,0,0)$ is $\omega$-periodic.
\end{proof}

To obtain the basic reproductive number, we will use the general setting and the notation in~\cite{Wang-Zhao-JDDE-2008} and, letting $x=(x_1,x_2,x_3,x_4)=(E,I,S,R)$, we can write system~\eqref{eq:ProblemaPrincipal} in the form
    $$x'=\mathcal F(t,x)-(\mathcal V^-(t,x)-\mathcal V^+(t,x))$$
where
\[
\cF(t,x)
=
\left[
\begin{array}{c}
\beta(t) \varphi(S,N,I)\\ 0\\ 0\\ 0
\end{array}
\right],
\]
\[
\cV^-(t,x)
=
\left[
\begin{array}{c}
(\mu(t)+\epsilon(t))E \\ (\mu(t)+\gamma(t))I\\
\beta(t)\varphi(S,N,I)+\mu(t) S\\ (\mu(t)+\eta(t)) R
\end{array}
\right]
\]
and
\[
\cV^+(t,x)
=
\left[
\begin{array}{c}
0\\  \eps E\\ \Lambda(t)+\eta(t) R\\ \gamma(t)I
\end{array}
\right].
\]
It is easy to see that conditions~(A1) to~(A5) in page 701 of~\cite{Wang-Zhao-JDDE-2008} are consequence of conditions P\ref{Cond-P0}) to P\ref{Cond-P5}).

Letting $x^*=(0,0,S^*(t),0)$ be the unique positive $\omega$-periodic solution of~\eqref{eq:ProblemaPrincipal} given by Theorem~\ref{eq:DFperiodicSol}, by P\ref{Cond-P1}) and P\ref{Cond-P2}) we have $\frac{\partial \varphi}{\partial N}(S^*(t),S^*(t),0)$=0 and therefore the matrices in (2.2) in~\cite{Wang-Zhao-JDDE-2008} are given by
\[
F(t)=
\left[
\begin{array}{cc}
0  & \beta(t)\frac{\partial \varphi}{\partial I}(S^*(t),S^*(t),0)\\
0 & 0
\end{array}
\right]
\]
and
\[
V(t)=
\left[
\begin{array}{cc}
\mu(t)+\eps(t) & 0\\
-\eps(t) & \mu(t)+\gamma(t)
\end{array}
\right].
\]
Denote by $Y(t,s)$, $t\ge s$, the evolution operator of the linear $\omega$-periodic system $y'=-V(t)y$, i.e. $Y(t,s)$ is such that
\[
  \dfrac{d}{dt}[Y(t,s)]
  =
\left[
\begin{array}{cc}
-(\mu(t)+\eps(t)) & 0\\
\eps(t) & -(\mu(t)+\gamma(t))
\end{array}
\right]
  Y(t,s)
\]
for $t \ge s$, $s\in \R$.  The next infection operator $L:C_\omega \to C_\omega$ becomes in our context
$$(L\varphi)(t)=\int_0^\infty Y(t,t-a)F(t-a)\varphi(t-a)\ da$$
and we define the basic reproduction ratio in our context by
$$\mathcal R_0=\rho(L).$$

By Theorem~2.2 in~\cite{Wang-Zhao-JDDE-2008} we get the following result.

\begin{theorem}\label{local-stability-periodic-seirs}
Assume that conditions P\ref{Cond-P0}) to P\ref{Cond-P5}) hold. Then, for system~\eqref{eq:ProblemaPrincipal}, the disease-free periodic solution $x_0^*$ is locally asymptotically stable if $\mathcal R_0<1$ and unstable if $\mathcal R_0 > 1$. Furthermore
\begin{enumerate}[1)]
  \item $\mathcal R_0=1$ if and only if $\rho(\Phi_{F-V}(\omega))=1$;
  \item $\mathcal R_0<1$ if and only if $\rho(\Phi_{F-V}(\omega))<1$;
  \item $\mathcal R_0>1$ if and only if $\rho(\Phi_{F-V}(\omega))>1$,
\end{enumerate}
where $\Phi_{F-V}(t)$ is the fundamental matrix solution of the linear system $$x'=(F(t)-V(t))x.$$
\end{theorem}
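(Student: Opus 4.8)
The plan is to present this as a direct application of Theorem~2.2 in~\cite{Wang-Zhao-JDDE-2008}, so that the work consists almost entirely in checking that our setting satisfies the hypotheses of that abstract result. The computational content is minimal; what matters is the structural verification.

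First I would confirm that the splitting $x'=\mathcal F(t,x)-(\mathcal V^-(t,x)-\mathcal V^+(t,x))$ displayed above, together with the ordering $x=(E,I,S,R)$ that places the two infected compartments first, satisfies conditions (A1)--(A5) of~\cite{Wang-Zhao-JDDE-2008}. Concretely, the nonnegativity, continuous differentiability and $\omega$-periodicity of every component of $\mathcal F$ and $\mathcal V^\pm$ follow from P\ref{Cond-P0}), P\ref{Cond-P1}) and the explicit formulas; the boundary and monotonicity properties P\ref{Cond-P2})--P\ref{Cond-P5}) of $\varphi$ guarantee that new infections $\mathcal F$ are supported only on the $(E,I)$ block and vanish on the disease-free set, while each $\mathcal V_i^-$ vanishes whenever the corresponding coordinate is zero. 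This is precisely the claim already recorded in the excerpt.

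Next I would check the remaining spectral hypotheses required by Theorem~2.2. The disease-free subsystem governing the noninfected compartments $(S,R)$ must admit a globally attractive $\omega$-periodic state: this is exactly the content of Lemma~\ref{lemma:exist-per-sol-DF} together with Theorem~\ref{eq:DFperiodicSol}, which produce the globally asymptotically stable periodic solution $S^*$ and force $R\to0$. I would then verify $\rho(\Phi_{-V}(\omega))<1$. Since $V(t)$ is lower triangular with positive diagonal entries $\mu(t)+\epsilon(t)$ and $\mu(t)+\gamma(t)$ and the single off-diagonal entry $-\epsilon(t)$, the matrix $-V(t)$ is cooperative, and its evolution operator $Y(t,s)$ is lower triangular with diagonal Floquet multipliers $\e^{-\int_0^\omega(\mu+\epsilon)}$ and $\e^{-\int_0^\omega(\mu+\gamma)}$, both strictly less than $1$ because $\mu>0$; hence $\rho(\Phi_{-V}(\omega))<1$.

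With all hypotheses in place, Theorem~2.2 of~\cite{Wang-Zhao-JDDE-2008} applies verbatim and delivers both conclusions at once: it identifies the sign of $\mathcal R_0-1$ with that of $\rho(\Phi_{F-V}(\omega))-1$, which gives the three stated equivalences, and it asserts that the disease-free periodic orbit is locally asymptotically stable when $\rho(\Phi_{F-V}(\omega))<1$ and unstable when $\rho(\Phi_{F-V}(\omega))>1$; reading this through the equivalences yields the dichotomy in terms of $\mathcal R_0$. The one point demanding care is the linearization bookkeeping at $x^*$: one must check that $\frac{\partial\varphi}{\partial N}(S^*,S^*,0)=0$ by P\ref{Cond-P1})--P\ref{Cond-P2}), so that the incidence linearizes to $\beta(t)\,\frac{\partial\varphi}{\partial I}(S^*,S^*,0)\,I$ and the matrices $F(t)$ and $V(t)$ are exactly those displayed, and that the noninfected block decouples in the way the abstract theorem requires.
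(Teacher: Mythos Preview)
Your proposal is correct and follows the same approach as the paper, which simply states that the result is obtained ``by Theorem~2.2 in~\cite{Wang-Zhao-JDDE-2008}'' after recording that (A1)--(A5) follow from P\ref{Cond-P0})--P\ref{Cond-P5}) and computing $F(t)$, $V(t)$. You are in fact more careful than the paper: you explicitly verify the remaining hypotheses (A6) (global attractivity of the disease-free state in the uninfected subsystem, via Lemma~\ref{lemma:exist-per-sol-DF} and Theorem~\ref{eq:DFperiodicSol}) and (A7) ($\rho(\Phi_{-V}(\omega))<1$, via the triangular structure of $V$), which the paper leaves implicit.
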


We begin by defining some concepts. Let $A$ be an square matrix. We say that $A$ is cooperative if all its off-diagonal elements are non-negative and we say that $A$ is irreducible if it can not be placed into block upper-triangular form by simultaneous row/column permutations.
To obtain the global stability of the disease-free periodic solution we need an auxiliary result.

\begin{lemma}[Lemma 2.1 in~\cite{Nakata-Kuniya-JMAA-2010}]\label{lemma:global-stab-dfe-aux}
Let $A(t)$ be a continuous, cooperative, irreducible and $\omega$-periodic matrix function, let $\Phi_A(t)$ be the fundamental matrix solution of \begin{equation}\label{eq:lin-equation-periodic-persist}
x'=A(t)x
 \end{equation}and let $p=\frac{1}{\omega}\ln(\rho(\Phi_A(\omega)))$, where $\rho$ denotes the spectral radius. Then, there exists a positive $\omega$-periodic function $v(t)$ such that $\e^{pt}v(t)$ is a solution of~\eqref{eq:lin-equation-periodic-persist}.
\end{lemma}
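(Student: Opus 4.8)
The plan is to combine Floquet theory with the Perron--Frobenius theorem, the bridge between the two being the strict positivity of the evolution operator $\Phi_A(t)$ that is forced by cooperativity and irreducibility of $A(t)$.

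First I would reduce to an entrywise nonnegative coefficient matrix. Since $A$ is continuous and $\omega$-periodic it is bounded, so there is a constant $c>0$ for which $B(t):=A(t)+c\Id$ has all entries nonnegative for every $t$. Because $c\Id$ commutes with $A(t)$, the map $x\mapsto \e^{ct}x$ carries solutions of~\eqref{eq:lin-equation-periodic-persist} to solutions of $x'=B(t)x$, whence $\Phi_B(t)=\e^{ct}\Phi_A(t)$; in particular $\Phi_A(t)$ and $\Phi_B(t)$ have exactly the same sign pattern. Expanding $\Phi_B(t)$ in its Peano--Baker (Dyson) series exhibits it as a sum of products and integrals of the nonnegative matrices $B(s)$, so every summand is nonnegative and $\Phi_A(t)$ is nonnegative for all $t\ge 0$. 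Irreducibility of $A(t)$, equivalently of $B(t)$ since the two share all off-diagonal entries, guarantees that for each pair $(i,j)$ a suitable term of the series contributes a positive $(i,j)$ entry when $t>0$; hence $\Phi_A(t)$ has all entries strictly positive for every $t>0$, and in particular the monodromy matrix $\Phi_A(\omega)$ is a strictly positive matrix.

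Next I would apply the Perron--Frobenius theorem to the positive matrix $\Phi_A(\omega)$: its spectral radius $\lambda:=\rho(\Phi_A(\omega))$ is positive, is a simple eigenvalue, and admits an eigenvector $w$ with all components strictly positive, $\Phi_A(\omega)w=\lambda w$. Writing $\lambda=\e^{p\omega}$ recovers precisely the constant $p=\frac{1}{\omega}\ln(\rho(\Phi_A(\omega)))$ of the statement, and positivity of $\lambda$ is what makes $p$ well defined. I would then set
\[
v(t):=\e^{-pt}\Phi_A(t)w,
\]
so that $\e^{pt}v(t)=\Phi_A(t)w$ is the solution of~\eqref{eq:lin-equation-periodic-persist} with initial value $w$, as required. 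The $\omega$-periodicity of $v$ follows from the cocycle identity $\Phi_A(t+\omega)=\Phi_A(t)\Phi_A(\omega)$ — valid because both sides solve the same matrix equation with the same value at $t=0$, using the $\omega$-periodicity of $A$ — together with $\Phi_A(\omega)w=\e^{p\omega}w$; indeed
\[
v(t+\omega)=\e^{-p(t+\omega)}\Phi_A(t)\Phi_A(\omega)w=\e^{-p(t+\omega)}\e^{p\omega}\Phi_A(t)w=\e^{-pt}\Phi_A(t)w=v(t).
\]
Positivity of $v$ on all of $\R$ then reduces, by periodicity, to $[0,\omega]$: one has $v(0)=w$, which is componentwise positive, while for $t\in(0,\omega]$ the strict positivity of $\Phi_A(t)$ together with $w>0$ gives $\Phi_A(t)w>0$ componentwise.

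I expect the genuine difficulty to lie in the first step, namely in proving that cooperativity and irreducibility force $\Phi_A(t)$ to be strictly positive for $t>0$; this is exactly where both hypotheses on $A(t)$ are used, and it is the structural input that makes the Perron--Frobenius argument applicable. The remaining steps are the standard Floquet bookkeeping and a direct verification, and present no real obstacle.
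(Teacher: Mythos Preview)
Your argument is correct and is the standard proof of this result, combining strong positivity of the monodromy operator (forced by cooperativity and irreducibility) with the Perron--Frobenius theorem and Floquet theory. The paper, however, does not prove this lemma at all: it is quoted verbatim as Lemma~2.1 of Nakata--Kuniya and used as a black box, so there is no proof in the paper to compare against. Your sketch would stand on its own as a proof; the only place that would benefit from a little more detail in a formal write-up is the passage from irreducibility of $A(t)$ to strict positivity of $\Phi_A(t)$ for $t>0$, where one should be explicit that irreducibility together with positive diagonal (after the shift by $c\Id$) makes the integrated matrix primitive, or alternatively argue via the strong monotonicity of cooperative irreducible flows.
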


We are now in conditions to state a result about the persistence of the infectives in our context.

\begin{theorem}
If conditions P\ref{Cond-P0}) to P\ref{Cond-P5}) hold, the disease-free $\omega$-periodic solution $x^*=(S^*(t),0,0,0)$ of system~\eqref{eq:ProblemaPrincipal} is globally asymptotically stable if $\mathcal R_0<1$.
\end{theorem}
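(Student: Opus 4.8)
The plan is to combine the local asymptotic stability already provided by Theorem~\ref{local-stability-periodic-seirs} (valid since $\mathcal R_0<1$) with a separate proof of global attractivity of $x^*$; together these give global asymptotic stability. The core of the argument is to dominate the infective subsystem $(E,I)$ by a cooperative, irreducible, $\omega$-periodic \emph{linear} system whose monodromy matrix has spectral radius below $1$, conclude $E(t),I(t)\to 0$, and then recover $R(t)\to 0$ and $S(t)\to S^*(t)$.

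First I would linearize the incidence near the disease-free solution. Adding the equations in~\eqref{eq:ProblemaPrincipal} gives $N'=\Lambda(t)-\mu(t)N$, so by Lemma~\ref{lemma:exist-per-sol-DF} we have $N(t)\to S^*(t)$, while Lemma~\ref{lemma:invariant-region-periodic} gives $0\le S(t)\le N(t)$. Raising the first argument to $N$ by P\ref{Cond-P4}) and then using P\ref{Cond-P5}) with P\ref{Cond-P2}) yields
$$\varphi(S,N,I)\le \varphi(N,N,I)\le \frac{\partial\varphi}{\partial I}(N,N,0)\,I.$$
Since $\partial\varphi/\partial I$ is continuous (P\ref{Cond-P1})), the relevant arguments stay in a compact set, and $N(t)-S^*(t)\to 0$, for any $\xi>0$ there is $T_\xi$ with $\frac{\partial\varphi}{\partial I}(N(t),N(t),0)\le \frac{\partial\varphi}{\partial I}(S^*(t),S^*(t),0)+\xi$ for $t\ge T_\xi$. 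Hence, for $t\ge T_\xi$, the pair $(E,I)$ is a subsolution, $(E,I)'\le A_\xi(t)\,(E,I)^{\top}$ componentwise, of the linear system with matrix
$$A_\xi(t)=\begin{bmatrix} -(\mu(t)+\epsilon(t)) & \beta(t)\left(\frac{\partial\varphi}{\partial I}(S^*(t),S^*(t),0)+\xi\right)\\ \epsilon(t) & -(\mu(t)+\gamma(t))\end{bmatrix}.$$

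Next I would exploit the structure of $A_\xi$. It is continuous, $\omega$-periodic, cooperative, and irreducible: $\epsilon(t)>0$ by P\ref{Cond-P0}), and from P\ref{Cond-P3}) with $\varphi(S,N,0)=0$ one gets $\frac{\partial\varphi}{\partial I}(S^*,S^*,0)\ge c_1 S^*>0$. Note $A_0=F-V$, so $\rho(\Phi_{A_0}(\omega))<1$ by Theorem~\ref{local-stability-periodic-seirs}. Because $A_\xi$ depends continuously (linearly) on $\xi$, so does $\Phi_{A_\xi}(\omega)$ and hence its spectral radius; I therefore fix $\xi$ small enough that $\rho(\Phi_{A_\xi}(\omega))<1$, and only then choose the corresponding $T_\xi$. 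Applying Lemma~\ref{lemma:global-stab-dfe-aux} to $A_\xi$ produces a positive $\omega$-periodic $v_\xi(t)$ and $p_\xi=\frac{1}{\omega}\ln\rho(\Phi_{A_\xi}(\omega))<0$ with $\e^{p_\xi t}v_\xi(t)$ a solution. Choosing $K$ large enough that $(E(T_\xi),I(T_\xi))\le K\e^{p_\xi T_\xi}v_\xi(T_\xi)$ componentwise (possible as $v_\xi>0$), the comparison theorem for cooperative systems gives $0\le (E(t),I(t))\le K\e^{p_\xi t}v_\xi(t)$ for $t\ge T_\xi$; since $p_\xi<0$ and $v_\xi$ is bounded, $E(t),I(t)\to 0$.

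Finally I would recover the remaining components. From $R'=\gamma(t)I-(\mu(t)+\eta(t))R$ with $I\to 0$ and $\mu+\eta\ge\mu^\ell>0$, a standard estimate gives $R(t)\to 0$. Writing $u=S-S^*$ and subtracting $(S^*)'=\Lambda(t)-\mu(t)S^*$ yields $u'=-\mu(t)u+h(t)$ with $h(t)=-\beta(t)\varphi(S,N,I)+\eta(t)R(t)$; since $\varphi(S,N,I)\le c_2 SI\to 0$ by P\ref{Cond-P3}) and $R\to 0$, we have $h(t)\to 0$, and $\mu\ge\mu^\ell>0$ forces $u(t)\to 0$, i.e. $S(t)\to S^*(t)$. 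Thus every nonnegative solution converges to $x^*$, which together with the local asymptotic stability from Theorem~\ref{local-stability-periodic-seirs} proves global asymptotic stability. I expect the main obstacle to be the construction in the second and third paragraphs: producing the correct linear differential inequality requires carefully combining the monotonicity hypotheses with $N\to S^*$ and the uniform continuity of $\partial\varphi/\partial I$, and the order of the choices of $\xi$ (via continuity of the spectral radius) and $T_\xi$ must be handled correctly for the comparison to close.
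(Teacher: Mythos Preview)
Your proof is correct and follows essentially the same approach as the paper: bound the incidence by $(\partial\varphi/\partial I(S^*,S^*,0)+\text{small})\,I$, compare $(E,I)$ with the cooperative linear system $F-V+\text{perturbation}$, invoke Lemma~\ref{lemma:global-stab-dfe-aux} after choosing the perturbation small enough that the monodromy spectral radius stays below $1$, and then cascade to $R\to 0$ and $S\to S^*$. The only cosmetic differences are that the paper raises $S$ to $S^*+\eps_1$ and lowers $N$ to $S^*-\eps_1$ (using both monotonicities in P\ref{Cond-P4})) rather than your simpler step $\varphi(S,N,I)\le\varphi(N,N,I)$, and that the paper recovers $S\to S^*$ via $S-S^*=N-S^*-E-I-R$ instead of your ODE argument for $u=S-S^*$.
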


\begin{proof}
By Theorem~\ref{local-stability-periodic-seirs}, if $\cR_0<1$, then $x^*(t)=(S^*(t),0,0,0)$, the disease-free $\omega$-periodic solution, is locally asymptotically stable. On the other hand, by~\ref{cond-2-aux}) in Lemma \ref{lemma:exist-per-sol-DF}, for any $\eps_1>0$ there exists $T_1>0$ such that
\begin{equation}\label{eq:bound-N}
S^*(t)-\eps_1 \le N(t) \le S^*(t)+\eps_1
\end{equation}
for $t > T_1$. Thus $S(t) \le N(t) \le S^*(t)+\eps_1$ and $N(t) \ge S^*(t)-\eps_1$. By conditions~P\ref{Cond-P1}), P\ref{Cond-P4}) and P\ref{Cond-P5}) there is a function $\psi$ such that $\psi(\xi) \to 0$ as $\xi \to 0$ and
\[
\begin{split}
\varphi(S(t),N(t),I(t))
& \le \varphi(S^*(t)+\eps_1,S^*(t)-\eps_1,I(t))\\
& = \dfrac{\varphi(S^*(t)+\eps_1,S^*(t)-\eps_1,I(t))}{I(t)}I(t)\\
& \le I(t) \lim_{\delta \to 0^+} \dfrac{\varphi(S^*(t)+\eps_1,S^*(t)-\eps_1,\delta)}{\delta}\\
& = \dfrac{\partial \varphi}{\partial I}(S^*(t)+\eps_1,S^*(t)-\eps_1,0) \ I(t)\\
& \le \left( \dfrac{\partial \varphi}{\partial I}(S^*(t),S^*(t),0)+\psi(\eps_1) \right) I(t),
\end{split}
\]
for $t>T_1$. Therefore, by the second and third equations in~\eqref{eq:ProblemaPrincipal}, we have
\[
\begin{cases}
E' \le \beta(t)\left[\dfrac{\partial \varphi}{\partial I}(S^*(t),S^*(t),0)I+\psi(\eps_1)I\right] -(\mu(t)+\eps(t))E\\
I' = \eps(t)E-(\mu(t)+\gamma(t))I
\end{cases}.
\]
Let
\[
M_2(t)=
\left[
\begin{array}{cc}
0 & \beta(t) \\
0 & 0
\end{array}
\right].
\]
By Theorem~\ref{local-stability-periodic-seirs} we conclude that $\rho(\Phi_{F-V}(\omega))<1$. Choose $\eps_1>0$ such that $\rho(\Phi_{F-V+\psi(\eps_1)M_2}(\omega))<1$ and consider the system
\[
\begin{cases}
u' = \beta(t)\left[\dfrac{\partial \varphi}{\partial I}(S^*(t),S^*(t),0)v+\psi(\eps_1)v\right] -(\mu(t)+\eps(t))u\\
v' = \eps(t)u-(\mu(t)+\gamma(t))v
\end{cases},
\]
or, in matrix language,
\[
\left[
\begin{array}{c}
u'\\
v'
\end{array}
\right]
=
\left(F(t)+V(t)
+\psi(\eps_1)M_2(t)
\right)
\left[
\begin{array}{c}
u\\
v
\end{array}
\right].
\]
By Lemma~\ref{lemma:global-stab-dfe-aux} and the standard comparison principle, there are $\omega$-periodic functions $v_1$ and $v_2$ such that $$E(t)\le v_1(t)\e^{p t} \quad \text{and} \quad I(t)\le v_2(t)\e^{p t},$$ where $p=\frac{1}{\omega} \ln(\rho(\Phi_{F-V+\psi(\eps_1)M_2}(\omega)))$. We conclude that $I(t)\to 0$ and $E(t)\to 0$ as $t \to +\infty$. It follows that $R(t)\to 0$ as $t \to +\infty$. Thus, since $N(t)-S^*(t) \to 0$ as $t \to +\infty$ we conclude that
$$S(t)-S^*(t)=N(t)-S^*(t)-E(t)-I(t)-R(t) \to 0,$$
as $t \to +\infty$. Hence the disease-free periodic solution is globally asymptotically stable. The result follows.
\end{proof}

\section{Persistence of the infective compartment and existence of endemic periodic orbits}\label{section:PersistenceExistence}

The next theorem shows that, when $\cR_0>1$, the infectives are persistent. Its proof consists in adapting the argument used in the first example in section 3 of~\cite{Rebelo-Margheri-Bacaer-JMB-2012}, where the case of a SEIRS model with simple incidence is considered, to our more general situation.

\begin{theorem}\label{teo:persist-infectives}
Assume that conditions P\ref{Cond-P0}) to P\ref{Cond-P5}) hold and that $\cR_0>1$. Then system~\eqref{eq:ProblemaPrincipal} is persistent with respect to $I$.
\end{theorem}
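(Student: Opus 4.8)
The plan is to establish uniform persistence of $I$ by setting up a persistence argument of the Freedman--Ruan--Smith/Rebelo--Margheri--Bacaer type, exploiting the fact that the disease-free periodic orbit $x^*$ is a \emph{repeller} on the boundary when $\cR_0>1$. First I would fix the dynamically relevant compact attracting region furnished by Lemma~\ref{lemma:invariant-region-periodic}, namely the set where $\Lambda^\ell/\mu^u-\delta \le N \le \Lambda^u/\mu^\ell+\delta$, and restrict attention to the Poincar\'e map $P:(S_0,E_0,I_0,R_0)\mapsto (S(\omega),E(\omega),I(\omega),R(\omega))$ of the $\omega$-periodic system. The idea is to decompose the state space into the invariant ``disease-present'' interior and the boundary set $X_0=\{I=0,\ E=0\}$ on which the dynamics reduces to $S'=\Lambda-\mu S+\eta R$, $R'=-(\mu+\eta)R$, whose global attractor is exactly the single disease-free periodic orbit $(S^*(t),0,0,0)$ by Theorem~\ref{eq:DFperiodicSol} and Lemma~\ref{lemma:exist-per-sol-DF}.

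Next I would show that $x^*$ is a uniform weak repeller for the interior flow, which is where the hypothesis $\cR_0>1$ enters decisively. By Theorem~\ref{local-stability-periodic-seirs}, $\cR_0>1$ is equivalent to $\rho(\Phi_{F-V}(\omega))>1$, so the linearization $y'=(F(t)-V(t))y$ of the $(E,I)$-subsystem at $x^*$ has a Floquet exponent $p=\frac1\omega\ln\rho(\Phi_{F-V}(\omega))>0$. Using P\ref{Cond-P3}) and the lower bound on $\varphi/(SI)$, together with the fact that solutions starting near $x^*$ have $S(t)$ and $N(t)$ close to $S^*(t)$, I would produce a perturbed cooperative irreducible matrix $F(t)-V(t)-\sigma M_2(t)$ whose principal Floquet exponent is still positive for $\sigma>0$ small; Lemma~\ref{lemma:global-stab-dfe-aux} then gives a positive $\omega$-periodic eigenfunction and, via the comparison principle, forces $E(t)+I(t)$ to grow exponentially as long as the solution remains in a small neighbourhood of $x^*$. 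This proves that no interior trajectory can converge to $x^*$, i.e. $\limsup_{t\to\infty} I(t)$ is bounded away from zero by a uniform constant along any interior solution.

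Finally I would package this repelling property into uniform persistence. Since the semiflow generated by $P$ is point dissipative (again by Lemma~\ref{lemma:invariant-region-periodic}) and the global attractor is compact, and since the only invariant set in the boundary $X_0$ is the single orbit $x^*$ which is isolated and acyclic and whose stable set meets the interior trivially (by the weak-repeller estimate just established), the standard persistence machinery for periodic semiflows yields uniform persistence of $I$: there exists $\xi>0$, independent of the positive initial condition, with $\liminf_{t\to\infty} I(t)\ge \xi$. I expect the main obstacle to be the verification of the weak-repeller estimate uniformly in the $N$-dependence of $\varphi$: because the incidence is genuinely three-variable, one must control $\varphi(S,N,I)$ from below by something comparable to $\partial_I\varphi(S^*,S^*,0)\,I$ near the orbit, and this requires combining P\ref{Cond-P3}), P\ref{Cond-P4}) and P\ref{Cond-P5}) to trap the relevant quotient between the chosen perturbed linear system and the true nonlinear flow, exactly as the continuity argument with the auxiliary function $\psi$ was used in the global-stability proof, but now in the reverse (lower-bound) direction.
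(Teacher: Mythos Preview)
Your proposal is correct and would yield the result, but it takes a somewhat different route from the paper. The paper does not build the Poincar\'e-map persistence argument from scratch; instead it invokes Theorem~3 of Rebelo--Margheri--Baca\"er~\cite{Rebelo-Margheri-Bacaer-JMB-2012} directly and spends the proof verifying that theorem's hypotheses. Concretely, the paper checks (A8) by exhibiting the compact absorbing set from Lemma~\ref{lemma:invariant-region-periodic}; then, assuming $I(t)\le\delta$ for $t\ge t_0$, it derives explicit bounds $R\le k_1(\delta)$, $|S-S^*|\le k_4(\delta)$, $E\le k_3(\delta)$, $|N-S^*|\le k_5(\delta)$, all vanishing with $\delta$, and uses P\ref{Cond-P4}) to get the differential inequality
\[
E'\ge \beta(t)\,\varphi\bigl(S^*(t)-k_4(\delta),\,S^*(t)+k_5(\delta),\,I\bigr)-(\mu(t)+\eps(t))E,
\]
which plays the role of your lower comparison system; the ratio $\lambda(\delta)=\max_t \dfrac{\partial_I\varphi(S^*,S^*,0)}{\varphi(S^*-k_4,S^*+k_5,\delta)/\delta}\to 1$ then feeds into the cited theorem. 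Your approach instead packages the same lower estimate as a perturbed Floquet system $F-V-\sigma M_2$ and appeals to the abstract acyclicity/isolated-invariant-set machinery for periodic semiflows. The essential analytic content---the near-$x^*$ lower bound on $\varphi(S,N,I)/I$ obtained from P\ref{Cond-P4}) and P\ref{Cond-P5})---is identical; what differs is that the paper outsources the dynamical-systems step to~\cite{Rebelo-Margheri-Bacaer-JMB-2012}, while you reconstruct it. Your version is slightly more self-contained; the paper's is shorter because the cited theorem is already tailored to epidemic models with a single disease-free state.
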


\begin{proof}
To prove the theorem we will use Theorem~3 in~\cite{Rebelo-Margheri-Bacaer-JMB-2012}. It follows from Lemma~\ref{lemma:invariant-region-periodic} that condition (A8) in Theorem 3 in~\cite{Rebelo-Margheri-Bacaer-JMB-2012} holds, letting the compact set $K$ be the set
$$K=\{(S,E,I,R)\in (\R^+_0)^4:\Lambda^\ell/\mu^u \le S+E+I+R \le \Lambda^u/\mu^\ell\}$$
if $\Lambda$ or $\mu$ are not constant functions and
$$K=\{(S,E,I,R)\in (\R^+_0)^4:\Lambda/\mu -\delta \le S+E+I+R \le \Lambda/\mu+\delta \},$$
for some $0<\delta<\Lambda/\mu$, if $\Lambda$ and $\mu$ are constant functions.

Let $(S^*(t),0,0,0)$ be the disease free periodic solution of system~\eqref{eq:ProblemaPrincipal}. If there is $\delta>0$ and $t_0 \in \R$ such that $I(t)\le\delta$ for $t\ge t_0$ then, using P\ref{Cond-P2}) and P\ref{Cond-P3}), we have
$$R'\le \gamma^u\delta-(\mu+\eta)^\ell R,$$
$$(S-S^*)'\le -\beta(t)\varphi(S,N,I)-\mu(t)(S-S^*)+\eta^u R\le -\mu^\ell(S-S^*)+\eta^u R,$$
$$E' \le \beta^u\varphi(S,N,I)-(\mu+\eps)^\ell E\le \beta^uc_2S\delta-(\mu+\eps)^\ell E$$
and
$$(S^*-S)'\le \beta(t)\varphi(S,N,I)-\mu(t)(S^*-S)-\eta^u R\le \beta^uc_2S\delta-\mu^\ell(S^*-S)$$
Thus, for $t$ sufficiently large, we have
\[
R(t) \le 2\delta \dfrac{\gamma^u}{(\mu+\gamma)^\ell} :=k_1(\delta),
\]
\[
S(t)-S^*(t) \le 2k_1(\delta)\dfrac{\eta^u}{\mu^\ell}:=k_2(\delta),
\]
\begin{equation}\label{eq:bound-persistence-E}
E(t) \le 2\delta\dfrac{c_2\beta^u(k_2(\delta)+S^*)^u}{(\mu+\eps)^\ell}
:=k_3(\delta)
\end{equation}
and
\begin{equation}\label{eq:bound-persistence-S-S^*}
S^*(t)-S(t) \le 2\delta\dfrac{c_2\beta^u(k_2(\delta)+S^*)^u}{\mu^\ell}
:=k_4(\delta).
\end{equation}
Also, according to~\eqref{eq:bound-N}, we also have, for $t$ sufficiently large,
\begin{equation}\label{eq:bound-persistence-N}
|S^*(t)-N(t)| \le k_5(\delta),
\end{equation}
with $k_5(\delta)\to 0$ as $\delta \to 0$.

Now, we will check assumptions (ii) and (iii) (a) in Theorem~3 in~\cite{Rebelo-Margheri-Bacaer-JMB-2012}.
Assume that there exists $t_0 \in \R$ such that $I(t)\le\delta$ for each $t\ge t_0$. From~\eqref{eq:bound-persistence-E}, there exists $t_3\ge t_0$ such that for each $t\ge t_3$ we have $E(t)\le k_3(\delta)$. So we obtain~(iii) (a) in Theorem~3 in~\cite{Rebelo-Margheri-Bacaer-JMB-2012} setting $\eta(\delta)=k_3(\delta)$ and (i) holds since $\eta(\delta)\to 0$ as $\delta \to 0$. Let us now check assumptions (i) and (iii) (b) in Theorem~3 in~\cite{Rebelo-Margheri-Bacaer-JMB-2012}. Choose $\delta_1>0$ such that $k_4(\delta) < \displaystyle \min_{t \in [0,\omega)} S^*(t)$ for all $0<\delta<\delta_1$. Take $\delta \in (0,\delta_1)$ and suppose that there exists
$t_0\in\R$ such that $\|(E(t),I(t))\|\le \delta$ for each $t \ge t_0$. Then~\eqref{eq:bound-persistence-S-S^*} shows that there exists
$t_4\ge t_0$ such that $S(t)\ge S^*(t)-k_4(\delta)$ for $t \ge t_4$ and~\eqref{eq:bound-persistence-N} shows that $N(t)\le S^*(t)+k_5(\delta)$. Therefore, by P\ref{Cond-P4}), we get
\[
\begin{cases}
E' \ge \beta(t)\varphi(S^*(t)-k_4(\delta),S^*(t)+k_5(\delta),I)-(\mu(t)+\eps(t)) E\\
I' \ge \eps(t)E-(\mu(t)+\gamma(t))I
\end{cases}
\]
and assumption (iii) (b) in Theorem~3 in~\cite{Rebelo-Margheri-Bacaer-JMB-2012} holds with
\[
\lambda(\delta)=\max_{t \in [0,\omega]} \,
\dfrac{\partial \varphi/\partial I \, (S^*(t),S^*(t),0)} {\varphi(S^*(t)-k_4(\delta),S^*(t)+k_5(\delta),\delta)/\delta}.
\]
Since $\lambda(\delta)\to 1$ as $\delta \to 0$ we conclude that (ii) in the referred theorem holds. The result follows.
\end{proof}

We need the following auxiliary result that will be used to show the existence and uniqueness of the solution of some algebraic equations in the proof of our main result.
\begin{lemma}\label{lemma:unique-sol}
Assume that condition P\ref{Cond-P0}) to P\ref{Cond-P4}) hold. Then there is a unique $r>0$ that solves equation
\begin{equation}\label{eq:sol-e^u_3-lemma}
\dfrac{\bar\epsilon\bar\beta}{\bar\mu+\bar\gamma} \, \varphi\left(\bar\Lambda/\bar\mu - d r, \bar\Lambda/\bar\mu, r\right)/r-(\bar\mu+\bar\epsilon)=0,
\end{equation}
where
\[
d=\frac{(\bar\mu+\bar\gamma)(\bar\mu+\bar\epsilon)(\bar\mu+\bar\eta)-\bar\epsilon\bar\gamma\bar\eta}{\bar\epsilon\bar\mu(\bar\mu+\bar\eta)}.
\]
This unique solution belongs to the interval $]0,\bar\Lambda/\bar\mu[$. \end{lemma}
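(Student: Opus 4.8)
The plan is to turn the equation into a level-set problem for a single scalar function and to exploit the monotonicity built into P\ref{Cond-P3})--P\ref{Cond-P4}). Writing $A=\bar\Lambda/\bar\mu$ and
\[
g(r)=\frac{\bar\epsilon\bar\beta}{\bar\mu+\bar\gamma}\,\frac{\varphi(A-dr,A,r)}{r},
\]
the stated equation is exactly $g(r)=\bar\mu+\bar\epsilon$, so I would look for the roots of $g$ on the interval of $r$ for which the first argument of $\varphi$ is admissible. First I would fix that interval: expanding the numerator of $d$ and cancelling the term $\bar\epsilon\bar\gamma\bar\eta$ gives $d=[\bar\mu^2+\bar\mu(\bar\epsilon+\bar\gamma+\bar\eta)+\bar\gamma\bar\epsilon+\bar\epsilon\bar\eta+\bar\gamma\bar\eta]/[\bar\epsilon(\bar\mu+\bar\eta)]$, from which one checks $d>1$. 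Hence $r\mapsto A-dr$ is strictly decreasing and stays positive precisely for $r\in(0,A/d)$, with $A/d<A=\bar\Lambda/\bar\mu$. Thus it suffices to produce a unique root of $g(r)=\bar\mu+\bar\epsilon$ in $(0,A/d)$, and any such root automatically lies in $(0,\bar\Lambda/\bar\mu)$, which gives the localisation in the statement.

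Next I would record the boundary behaviour of $g$. As $r\to(A/d)^-$ the first slot $A-dr\to0^+$, so by $\varphi(0,N,I)=0$ (from P\ref{Cond-P2})) together with the upper estimate $\varphi(S,N,I)\le c_2 SI$ (from P\ref{Cond-P3})) one gets $0\le g(r)\le\frac{\bar\epsilon\bar\beta}{\bar\mu+\bar\gamma}c_2(A-dr)\to0$. As $r\to0^+$, since $\varphi(S,N,0)=0$ and $\varphi$ is $C^1$ (P\ref{Cond-P1})), a mean-value argument in the last variable gives $\varphi(A-dr,A,r)/r\to\frac{\partial\varphi}{\partial I}(A,A,0)$, so
\[
g(0^+)=\frac{\bar\epsilon\bar\beta}{\bar\mu+\bar\gamma}\,\frac{\partial\varphi}{\partial I}(A,A,0),
\]
which is finite and, by letting $I\to0$ in $c_1 S\le\varphi(S,N,I)/I\le c_2 S$ (P\ref{Cond-P3})), bounded below by $\frac{\bar\epsilon\bar\beta}{\bar\mu+\bar\gamma}c_1 A>0$.

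The crux is to prove that $g$ is strictly decreasing on $(0,A/d)$. For $0<r_1<r_2<A/d$ put $S_i=A-dr_i$, so that $S_1>S_2>0$ and $r_1<r_2$, and I would factor the comparison into two monotone steps. First, since $S_1>S_2$, the monotonicity of $\varphi$ in $S$ (P\ref{Cond-P4})) gives $\varphi(S_2,A,r_2)\le\varphi(S_1,A,r_2)$; second, the nonincrease of $I\mapsto\varphi(S_1,A,I)/I$ gives $\varphi(S_1,A,r_2)/r_2\le\varphi(S_1,A,r_1)/r_1$. Chaining the two yields $g(r_2)\le g(r_1)$. This decomposition — an $S$-step governed by the $S$-monotonicity of $\varphi$ and an $I$-step governed by the decay of $\varphi/I$ — is the main technical obstacle, because the two effects compete ($S$ decreasing pushes $\varphi$ down, while $I$ increasing pushes it up) and only the structural decay of $\varphi/I$ forces the quotient $g$ to decrease; I would extract strictness from the genuine drop produced in one of the two steps via the two-sided bound of P\ref{Cond-P3}).

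With strict monotonicity established, $g$ is a continuous strictly decreasing bijection of $(0,A/d)$ onto $(0,g(0^+))$. Existence then reduces to checking that $\bar\mu+\bar\epsilon$ lies in this range, i.e. $g(0^+)>\bar\mu+\bar\epsilon$ — the place where the threshold-type inequality enters — after which the intermediate value theorem produces a root $r^*\in(0,A/d)$ and strict monotonicity makes it the only one. Since $(0,A/d)\subset(0,\bar\Lambda/\bar\mu)$, the unique solution lies in the asserted interval, completing the argument.
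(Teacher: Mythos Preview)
Your approach is essentially the paper's: define the scalar function (your $g$ is the paper's $\psi$ shifted by $\bar\mu+\bar\epsilon$), pin down its endpoint values, show it is strictly decreasing, and invoke the intermediate value theorem. The only cosmetic difference is that the paper obtains strict monotonicity by computing $\psi'<0$, whereas you argue by direct comparison; both routes rest on the same two ingredients, the $S$-monotonicity of $\varphi$ and the decay of $I\mapsto\varphi(S,N,I)/I$.

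Two caveats you share with the paper. First, the ``nonincrease of $I\mapsto\varphi(S_1,A,I)/I$'' that drives your $I$-step is precisely hypothesis P\ref{Cond-P5}), not something extractable from P\ref{Cond-P3})--P\ref{Cond-P4}); the paper's derivative computation likewise invokes P\ref{Cond-P5}), even though the lemma as stated lists only P\ref{Cond-P0})--P\ref{Cond-P4}). Second, you correctly isolate the existence step as the inequality $g(0^+)>\bar\mu+\bar\epsilon$ but leave it unverified; the paper closes it by asserting $\overline{R}_0:=\frac{\bar\epsilon\bar\beta}{(\bar\mu+\bar\gamma)(\bar\mu+\bar\epsilon)}\,\frac{\partial\varphi}{\partial I}(\bar\Lambda/\bar\mu,\bar\Lambda/\bar\mu,0)>1$, a threshold condition that is not among the lemma's stated hypotheses but is in force where the lemma is actually used (inside the proof of Theorem~\ref{teo:main}, under $\mathcal R_0>1$). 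So your sketch is as complete as the paper's, modulo these two tacit assumptions.
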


\begin{proof}
According to conditions P\ref{Cond-P1}), P\ref{Cond-P2}) and P\ref{Cond-P5}), the function $\psi:[0,\bar\Lambda/\bar\mu] \to \R$ given by
\[
\psi(v)=
\begin{cases}
\dfrac{\bar\epsilon\bar\beta}{\bar\mu+\bar\gamma} \dfrac{\varphi\left(\bar\Lambda/\bar\mu - d v, \bar\Lambda/\bar\mu, v\right)}{v} -(\bar\mu+\bar\epsilon) & \quad \text{if} \ \ 0<v\le\bar\Lambda/\bar\mu\\[2mm]
\dfrac{\bar\epsilon\bar\beta}{\bar\mu+\bar\gamma} \, \dfrac{\partial \varphi}{\partial I}\left(\bar\Lambda/\bar\mu, \bar\Lambda/\bar\mu, 0\right) -(\bar\mu+\bar\epsilon) & \quad \text{if} \ \ v = 0
\end{cases}
\]
is continuous and non-increasing and we have
    $$\psi(0)=\left[\dfrac{\bar\epsilon\bar\beta}{(\bar\mu+\bar\gamma)(\bar\mu+\bar\epsilon)} \displaystyle \dfrac{\partial \varphi}{\partial I}\left(\bar\Lambda/\bar\mu, \bar\Lambda/\bar\mu, 0\right) - 1\right](\bar\mu+\bar\epsilon)=\left(\overline{R}_0-1\right)(\bar\mu+\bar\epsilon)>0.$$
By P\ref{Cond-P2}), for the unique $d_0 \in ]0,\bar\Lambda/\bar\mu[$ satisfying $\bar\Lambda/\bar\mu-d d_0=0$, we get
    $$\psi\left(d_0\right)=\left[\dfrac{\bar\epsilon\bar\beta}{(\bar\mu+\bar\gamma)(\bar\mu+\bar\epsilon)} \displaystyle \dfrac{\varphi\left(0, \bar\Lambda/\bar\mu, d_0 \right)}{d_0} -1\right](\bar\mu+\bar\epsilon)=-(\bar\mu+\bar\epsilon)<0.$$
Thus, by Bolzano's theorem, there is $r \in ]0,d_0[ \subset ]0,\bar\Lambda/\bar\mu[$ that solves~\eqref{eq:sol-e^u_3-lemma}. Since
\[
\psi'(v)=\dfrac{\bar\eps\bar\beta}{\bar\mu+\bar\gamma} \
\dfrac{\left[-d\frac{\partial\varphi}{\partial S}(c(v))+\frac{\partial\varphi}{\partial I}(c(v))\right]v-
\varphi(c(v))}{v^2}<0,
\]
where $c(v)=(\bar\Lambda/\bar\mu-dv,\bar\Lambda/\bar\mu,v)$ (note that, by P\ref{Cond-P5}) we have $\frac{\partial\varphi}{\partial I}(c(v))v-\varphi(c(v))<0$ and by~P\ref{Cond-P4}) we have $\frac{\partial\varphi}{\partial S}(c(v))\ge 0$), we conclude that the solution is unique and the proof is complete.
\end{proof}

We also need to consider the matrix
\begin{equation}\label{eq:matrixM}
\mathcal M=
\left[
\begin{array}{cccc}
-\mu-K_{110} &
-K_{010}q/p &
-K_{011}r/p &
\left(-K_{010}+\eta\right)s/p \\
K_{110}p/q &
K_{010} &
K_{011}r/q &
K_{010}s/q \\
0 & \mu+\gamma & -(\mu+\gamma) & 0 \\
0 & 0 & \mu+\eta & -(\mu+\eta)
\end{array}
\right]
\end{equation}
where $r$ is the unique solution of~\eqref{eq:sol-e^u_3-lemma},
$$p=\frac{\bar\Lambda}{\bar\mu}- \frac{(\bar\mu+\bar\gamma)(\bar\mu+\bar\epsilon)(\bar\mu+\bar\eta) -\bar\epsilon\bar\gamma\bar\eta}{\bar\epsilon\bar\mu(\bar\mu+\bar\eta)}r, \quad q=(\bar\mu+\bar\gamma)r/\bar\eps, \quad s=\bar\gamma r/(\bar\mu+\bar\eta)$$
and
$$K_{abc}=\bar\beta\left[a\frac{\partial \varphi}{\partial S}(p,\bar\Lambda/\bar\mu,r)+b\frac{\partial \varphi}{\partial N}(p,\bar\Lambda/\bar\mu,r)+c\frac{\partial \varphi}{\partial I}(p,\bar\Lambda/\bar\mu,r)\right].$$

In the following result, we obtain conditions for the existence of endemic periodic orbits.

\begin{theorem}\label{teo:main}
  Assume that conditions~P\ref{Cond-P1}) to P\ref{Cond-P5}) hold. Assume also that
  \begin{enumerate}[1)]
    \item \label{cond-C0} $\mathcal R_0> 1$;
    \item \label{cond-C3} $\det \mathcal M \ne 0$.
  \end{enumerate}
  Then system~\eqref{eq:ProblemaPrincipal} has an endemic $\omega$-periodic solution.
\end{theorem}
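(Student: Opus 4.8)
The plan is to apply Mawhin's continuation theorem from coincidence degree theory, producing the endemic orbit as a coincidence point $Lu=\mathcal N u$. To force positivity I would first pass to logarithmic coordinates $u=(u_1,u_2,u_3,u_4)=(\ln S,\ln E,\ln I,\ln R)$, in which~\eqref{eq:ProblemaPrincipal} reads $u'=G(t,u)$, where $G_i$ is the $i$-th right-hand side of~\eqref{eq:ProblemaPrincipal} divided by the corresponding compartment. On the Banach space $X=Z$ of continuous $\omega$-periodic maps $\R\to\R^4$ with the supremum norm I set $Lu=u'$, with $\mathrm{Dom}(L)$ the $C^1$ functions, and $(\mathcal N u)(t)=G(t,u(t))$. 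Then $\ker L=\R^4$ (the constants), $\imagem L=\set{z\in Z:\int_0^\omega z\,dt=0}$, and with $Pu=Qu=\frac1\omega\int_0^\omega u\,dt$ one checks that $L$ is Fredholm of index zero and that $\mathcal N$ is $L$-compact on bounded sets by the Arzel\`a--Ascoli theorem. These verifications are routine.

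The crux is the a priori estimate: I must exhibit a bounded open box $\Omega\subset X$ no point of whose boundary solves $Lu=\lambda\mathcal N u$ for any $\lambda\in(0,1)$. Adding the four equations shows that in the original variables the homotopy system still satisfies $N'=\lambda(\Lambda(t)-\mu(t)N)$; evaluating at the extrema of the periodic function $N$, where $N'=0$ and $\lambda>0$, gives $\Lambda^\ell/\mu^u\le N(t)\le\Lambda^u/\mu^\ell$ uniformly in $\lambda$, exactly as in Lemma~\ref{lemma:invariant-region-periodic}. Since every compartment is bounded above by $N$, this furnishes uniform \emph{upper} bounds $u_i\le\ln(\Lambda^u/\mu^\ell)$.

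The main obstacle is the uniform \emph{lower} bound on $u_3=\ln I$: the logarithmic coordinates are chosen precisely so that the disease-free state sits at $u_3=-\infty$, and the homotopy must be kept away from it. I would extract this bound from hypothesis~\ref{cond-C0}, which by Theorem~\ref{local-stability-periodic-seirs} is equivalent to $\rho(\Phi_{F-V}(\omega))>1$. If $I$ stayed below a small threshold $\delta$ on all of $[0,\omega]$, then the comparison estimates used in the proof of Theorem~\ref{teo:persist-infectives}, together with the Floquet solution $\e^{pt}v(t)$ supplied by Lemma~\ref{lemma:global-stab-dfe-aux} with $p=\tfrac1\omega\ln\rho(\Phi_{F-V}(\omega))>0$, would force the $(E,I)$ pair to grow exponentially; the factor $\lambda$ only multiplies the exponent by $\lambda>0$, so instability persists for every $\lambda\in(0,1]$. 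An exponentially growing mode is incompatible with $I$ being a coordinate of a bounded periodic function, so $\max_t I(t)$ must exceed a positive constant independent of $\lambda$. Feeding this into the period-integrated identities $\int_0^\omega\epsilon E/I\,dt=(\bar\mu+\bar\gamma)\omega$, $\int_0^\omega\beta\varphi/E\,dt=(\bar\mu+\bar\epsilon)\omega$, $\int_0^\omega\gamma I/R\,dt=(\bar\mu+\bar\eta)\omega$, and the oscillation bounds $\abs{u_i(t)-u_i(s)}\le\int_0^\omega\abs{u_i'}\,dt$, I would propagate the lower bound from $I$ to $E$, then $R$ and $S$, obtaining uniform lower bounds $u_i\ge-A$.

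It remains to analyse $\mathcal N$ on $\ker L$ and to compute a Brouwer degree. For constant $u$ the equation $Q\mathcal N u=0$ is the averaged equilibrium system $\bar\Lambda=\bar\beta\varphi+\bar\mu S-\bar\eta R$, $\bar\beta\varphi=(\bar\mu+\bar\epsilon)E$, $\bar\epsilon E=(\bar\mu+\bar\gamma)I$, $\bar\gamma I=(\bar\mu+\bar\eta)R$; summing gives $N=\bar\Lambda/\bar\mu$, and back-substitution expresses $E=q$, $R=s$, $S=p$ in terms of $I=r$, collapsing the system to the single scalar equation~\eqref{eq:sol-e^u_3-lemma}. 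By Lemma~\ref{lemma:unique-sol} this has a unique root $r\in\,]0,\bar\Lambda/\bar\mu[$, so $Q\mathcal N$ has a unique zero $(p,q,r,s)$, which I place in the interior of $\Omega\cap\ker L$; this also secures $Q\mathcal N\neq0$ on $\partial\Omega\cap\ker L$. A direct computation identifies the Jacobian of this reduced map at its zero with the matrix $\mathcal M$ of~\eqref{eq:matrixM}, up to the positive row/column rescalings by the equilibrium components $p,q,r,s$ induced by the logarithmic change of variables, which do not affect whether the determinant vanishes. Hence hypothesis~\ref{cond-C3}, $\det\mathcal M\neq0$, says the zero is nondegenerate and $\deg(JQ\mathcal N,\Omega\cap\ker L,0)=\pm1\neq0$. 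With the three hypotheses of Mawhin's continuation theorem in force, $Lu=\mathcal N u$ has a solution in $\overline\Omega$; undoing the logarithm yields a strictly positive $\omega$-periodic solution of~\eqref{eq:ProblemaPrincipal} with $I>0$, that is, an endemic periodic orbit.
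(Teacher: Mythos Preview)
Your overall strategy---Mawhin's continuation theorem in logarithmic coordinates, with a priori bounds coming from the total-population identity, the integrated equations and the oscillation estimates, followed by the analysis of $Q\mathcal N$ on constants via Lemma~\ref{lemma:unique-sol} and the degree computation through $\det\mathcal M\ne0$---is precisely the paper's approach, and most of your sketch matches it well (your upper bound $u_i\le\ln(\Lambda^u/\mu^\ell)$ via $S,E,I,R\le N$ is in fact simpler than the paper's extrema computations and works equally well).

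The one genuine gap is your argument for the $\lambda$-uniform lower bound on $u_3=\ln I$. Your claim that for the linearised homotopy system $y'=\lambda\,(F(t)-V(t))\,y$ ``the factor $\lambda$ only multiplies the exponent by $\lambda>0$'' is incorrect when $F-V$ is genuinely time-dependent: the Floquet exponents of $y'=\lambda A(t)y$ do \emph{not} scale linearly in $\lambda$ unless $A$ is constant. A small-$\lambda$ expansion of the monodromy gives dominant Floquet exponent $\lambda\,s(\bar A)+O(\lambda^2)$, where $s(\bar A)$ is the spectral bound of the time-averaged matrix $\bar A=\overline{F-V}$; the sign of $s(\bar A)$ is governed by the \emph{averaged} reproduction number $\bar{\mathcal R}_0$, not by the periodic $\mathcal R_0$, and in general these two can straddle~$1$. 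Hence $\rho(\Phi_{F-V}(\omega))>1$ does not, by your route, force $\rho(\Phi_{\lambda(F-V)}(\omega))>1$ for every $\lambda\in(0,1]$, and your ``positive constant independent of $\lambda$'' is not yet justified. The paper handles this step differently: it invokes the uniform-persistence result, Theorem~\ref{teo:persist-infectives}, to produce a constant $K^\ell>0$ with $\e^{u_3(t)}\ge K^\ell$, and then propagates this to $E$ and $R$; the lower bound on $S$ is obtained separately, from the second equation at the maximum of $u_2$ together with condition~P\ref{Cond-P3}), and does not rely on $K^\ell$.
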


To obtain Theorem~\ref{teo:main} we will use a well known result in degree theory, the Mawhin continuation theorem~\cite{Gaines-Mawhin-NDE-1977,Mawhin-JDE-1972}.

\begin{proof}
Before proving Theorem~\ref{teo:main}, we first need to give some definitions and state some well known facts. Let $X$ and $Z$ be Banach spaces.
\begin{definition}
A linear mapping $\mathcal L: D \subseteq X \to Z$ is called a \emph{Fredholm mapping of index zero} if
\begin{enumerate}
\item $\dim \ker \mathcal L = \codim \imagem \mathcal L < \infty$;
\item $\imagem \mathcal L$ is closed in $Z$.
\end{enumerate}
\end{definition}
Given a Fredholm mapping of index zero, $\mathcal L: D \subseteq X \to Z$ , it is well known that there are continuous projectors $P:X\to X$ and $Q:Z\to Z$ such that
\begin{enumerate}
  \item $\imagem P = \ker \mathcal L$;
  \item $\ker Q = \imagem \mathcal L = \imagem (I-Q)$;
  \item $X = \ker \mathcal L \oplus \ker P$;
  \item $Z = \imagem \mathcal L \oplus \imagem Q$.
\end{enumerate}
It follows that $\mathcal L|_{D \cap \ker P}: (I-P)X \to \imagem \mathcal L$ is invertible. We denote the inverse of that map by $K_p$.
\begin{definition}
A continuous mapping $\mathcal N: X \to Z$ is called $L$-compact on $\overline{U} \subset X$, where $U$ is an open bounded set, if
\begin{enumerate}
\item $Q\mathcal N(\overline U)$ is bounded;
\item $K_p(I-Q)\mathcal N: \overline{U} \to X$ is compact.
\end{enumerate}
\end{definition}
Since $\imagem Q$ is isomorphic to $\ker \mathcal L$, there exists an isomorphism $\mathcal J: \imagem Q \to \ker \mathcal L$.

We are now prepared to state the theorem that will allow us to prove Theorem~\ref{teo:main}: Mawhin's continuation theorem~\cite{Mawhin-JDE-1972}.

\begin{theorem}(Mawhin's continuation theorem)
  Let $X$ and $Z$ be Banach spaces, let $U \subset X$ be an open and bounded set, let $\mathcal L: D \subseteq X \to Z$ be a  Fredholm mapping of index zero and let $\mathcal N: X \to Z$ be $L$-compact on $\overline{U}$. Assume that
\begin{enumerate}[1)]
  \item for each $\lambda \in (0,1)$ and $x \in \partial U \cap D$ we have $\mathcal L x \ne \lambda \mathcal N x$;
  \item for each  $x \in \partial U \cap \ker \mathcal L$ we have $Q\mathcal N x \ne 0$;
  \item $\deg(\mathcal J Q \mathcal N, U \cap \ker \mathcal L, 0) \ne 0$.
\end{enumerate}
Then the operator equation $\mathcal L x= \mathcal N x$ has at least one solution in $D \cap \overline U$.
\end{theorem}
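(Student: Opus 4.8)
The plan is to reduce the coincidence equation $\mathcal L x=\mathcal N x$ to a fixed-point problem for a compact map and then apply Leray--Schauder degree theory, following the classical construction of coincidence degree. First I would record the algebraic reformulation afforded by the projectors. Since $\imagem\mathcal L=\ker Q$, applying $Q$ to $\mathcal L x=\mathcal N x$ yields $Q\mathcal N x=0$, while applying $I-Q$ and inverting $\mathcal L|_{D\cap\ker P}$ yields $(I-P)x=K_p(I-Q)\mathcal N x$; conversely these two identities force $\mathcal L x=\mathcal N x$. Using the isomorphism $\mathcal J:\imagem Q\to\ker\mathcal L$, I would show that these are equivalent to the single fixed-point equation $x=Mx$, where
\[
Mx=Px+\mathcal J Q\mathcal N x+K_p(I-Q)\mathcal N x .
\]
Because $\mathcal L$ is Fredholm of index zero, $\dim\ker\mathcal L<\infty$, so the projector $P$ has finite rank and is compact; combined with the $L$-compactness of $\mathcal N$ --- which makes $\mathcal J Q\mathcal N$ a finite-dimensional map with relatively compact image and makes $K_p(I-Q)\mathcal N$ compact --- this shows that $M$ is compact on $\overline U$. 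Hence $I-M$ is a compact perturbation of the identity and the Leray--Schauder degree $\deg_{LS}(I-M,U,0)$ is well defined as soon as $M$ has no fixed point on $\partial U$, that is, as soon as there is no coincidence point on $\partial U$; if such a point exists the conclusion already holds, so I may assume it does not.

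Next I would introduce the compact homotopy
\[
M_\lambda x=Px+\mathcal J Q\mathcal N x+\lambda\,K_p(I-Q)\mathcal N x,\qquad \lambda\in[0,1],
\]
joining $M_0=P+\mathcal J Q\mathcal N$ to $M_1=M$, and verify that $M_\lambda$ has no fixed point on $\partial U$ for any $\lambda$. Applying $P$ to a fixed-point identity forces $Q\mathcal N x=0$, after which the identity becomes $\mathcal L x=\lambda\mathcal N x$. For $\lambda\in(0,1)$ this is excluded by condition~1); for $\lambda=1$ it is excluded by the assumption just made; and for $\lambda=0$ the fixed points satisfy $x\in\ker\mathcal L$ together with $Q\mathcal N x=0$, which condition~2) rules out on $\partial U\cap\ker\mathcal L$. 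Homotopy invariance of the Leray--Schauder degree then gives $\deg_{LS}(I-M,U,0)=\deg_{LS}(I-M_0,U,0)$.

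The final and most delicate step is to compute $\deg_{LS}(I-M_0,U,0)$ by reducing the infinite-dimensional degree to the Brouwer degree appearing in condition~3). Both $P$ and $\mathcal J Q\mathcal N$ take values in the finite-dimensional subspace $\ker\mathcal L$, so $M_0(\overline U)\subseteq\ker\mathcal L$, and the reduction property of the Leray--Schauder degree gives $\deg_{LS}(I-M_0,U,0)=\deg\big((I-M_0)|_{\ker\mathcal L},U\cap\ker\mathcal L,0\big)$. On $\ker\mathcal L$ one has $Px=x$, whence $(I-M_0)x=-\mathcal J Q\mathcal N x$ there, so that
\[
\deg_{LS}(I-M_0,U,0)=(-1)^{\dim\ker\mathcal L}\,\deg\big(\mathcal J Q\mathcal N,U\cap\ker\mathcal L,0\big),
\]
which is nonzero by condition~3). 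Therefore $\deg_{LS}(I-M,U,0)\neq 0$, and the existence property of the degree furnishes $x\in D\cap\overline U$ with $x=Mx$, equivalently $\mathcal L x=\mathcal N x$. I expect the main obstacles to be the careful matching of the three hypotheses to the boundary regimes $\lambda\in(0,1)$, $\lambda=1$, and $\lambda=0$, and the rigorous invocation of the reduction property that legitimately passes from the Leray--Schauder degree to the Brouwer degree in condition~3).
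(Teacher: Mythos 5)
The paper does not prove this statement: it quotes Mawhin's continuation theorem as a known result, citing Mawhin (1972) and Gaines--Mawhin (1977), and uses it as a black box in the proof of its main theorem. Your proposal is, in substance, exactly the classical coincidence-degree proof from those references, and it is correct: the equivalence $\mathcal L x=\lambda\mathcal N x \iff x=Px+\mathcal J Q\mathcal N x+\lambda K_p(I-Q)\mathcal N x$ (obtained by splitting along $Q$ and inverting $\mathcal L|_{D\cap\ker P}$, and conversely by applying $P$ to kill the $\mathcal J Q\mathcal N$ term, since it lies in $\imagem P$ while $K_p$ maps into $\ker P$) is right; the compactness of $M_\lambda$ follows as you say from the finite rank of $P$ and $\mathcal J Q\mathcal N$ together with $L$-compactness; the three boundary regimes $\lambda\in(0,1)$, $\lambda=1$, $\lambda=0$ are matched to hypotheses 1), the a priori reduction (if a coincidence point lies on $\partial U$ one is done), and 2), respectively; and the reduction to the Brouwer degree with the harmless factor $(-1)^{\dim\ker\mathcal L}$ recovers hypothesis 3). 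Only two pedantic points deserve a word in a full write-up: the relative boundary of $U\cap\ker\mathcal L$ in $\ker\mathcal L$ is contained in $\partial U\cap\ker\mathcal L$, which is what makes the Brouwer degree in 3) admissible via 2); and the fixed point produced by the degree lies in $D$ automatically, because $Px\in\ker\mathcal L\subseteq D$ and $K_p$ takes values in $D\cap\ker P$ --- you gestured at both, and neither affects the validity of the argument.
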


With the change of variables $S(t)=\e^{u_1(t)}$, $E(t)=\e^{u_2(t)}$, $I(t)=\e^{u_3(t)}$ and $R(t)=\e^{u_4(t)}$, system~\eqref{eq:ProblemaPrincipal} becomes

\begin{equation}\label{eq:sistema-aplic-Mawhin}
\begin{cases}
u_1'=\Lambda(t)\e^{-u_1}-\beta(t)\,\varphi(\e^{u_1},w,\e^{u_3})\e^{-u_1} -\mu(t)+\eta(t)\e^{u_4-u_1}\\
u_2'=\beta(t)\,\varphi(\e^{u_1},w,\e^{u_3})\e^{-u_2}-(\mu(t)+\epsilon(t))\\
u_3'=\epsilon(t)\e^{u_2-u_3} -(\mu(t)+\gamma(t)) \\
u_4'=\gamma(t)\e^{u_3-u_4}-(\mu(t)+\eta(t)) \\
w=\e^{u_1}+\e^{u_2}+\e^{u_3}+\e^{u_4}
\end{cases}
\end{equation}
and if $(v_1(t),v_2(t),v_3(t),v_4(t))$ is a periodic solution of period $\omega$ of system~\eqref{eq:sistema-aplic-Mawhin} then
$\left(\e^{v_1(t)},\e^{v_2(t)},\e^{v_3(t)},\e^{v_4(t)}\right)$ is a periodic solution of period $\omega$ of system~\eqref{eq:ProblemaPrincipal}.
Consider the system
\begin{equation}\label{eq:sistema-aplic-Mawhin-lambda}
\begin{cases}
u_1'=\lambda \left( \Lambda(t)\e^{-u_1}-\beta(t) \varphi(\e^{u_1},w,\e^{u_3}) \e^{-u_1} -\mu(t)+\eta(t)\e^{u_4-u_1} \right)\\
u_2'=\lambda \left( \beta(t) \varphi(\e^{u_1},w,\e^{u_3}) \e^{-u_2} -(\mu(t)+\epsilon(t)) \right)\\
u_3'=\lambda \left( \epsilon(t)\e^{u_2-u_3} -(\mu(t)+\gamma(t)) \right)\\
u_4'=\lambda \left( \gamma(t)\e^{u_3-u_4}-(\mu(t)+\eta(t)) \right)\\
w=\e^{u_1}+\e^{u_2}+\e^{u_3}+\e^{u_4}
\end{cases}.
\end{equation}
By~\ref{lemma:exist-per-sol-DF-1}) in Lemma~\ref{lemma:exist-per-sol-DF}, if $(u_1(t),u_2(t),u_3(t),u_4(t))$ is periodic then
\begin{equation}\label{eq:bounds_w}
  \frac{\Lambda^\ell}{\mu^u} \le w(t) \le \frac{\Lambda^u}{\mu^\ell}.
\end{equation}

We will now prepare the setting where we will apply Mawhin's theorem. We will
consider the Banach spaces $(X,\|\cdot\|)$ and $(Z,\|\cdot\|)$ where
$$X=Z=\{u = (u_1,u_2,u_3,u_4) \in C(\R,\R^4): u(t)=u(t+\omega) \}$$
and
$$\|u\|=\max_{t \in [0,\omega]} |u_1(t)|+\max_{t \in [0,\omega]} |u_2(t)|+\max_{t \in [0,\omega]} |u_3(t)|+\max_{t \in [0,\omega]} |u_4(t)|.$$
Let $\mathcal L: D \subseteq X \to Z$, where $D=X \cap C^1(\R,\R^4)$, be defined by
\[
\mathcal L u(t) = \dfrac{d u(t)}{dt}
\]
and $\mathcal N: X \to Z$ be defined by
\[
\mathcal N u(t) =
\left[
\begin{array}{c}
\Lambda(t)\e^{-u_1(t)}-\beta(t)
\varphi(\e^{u_1},w,\e^{u_3})
\e^{-u_1(t)} -\mu(t)+\eta(t)\e^{u_4(t)-u_1(t)}\\[2mm]
\beta(t) \varphi(\e^{u_1},w,\e^{u_3}) \e^{-u_2(t)}
-(\mu(t)+\epsilon(t))\\[2mm]
\epsilon(t)\e^{u_2(t)-u_3(t)} -(\mu(t)+\gamma(t))\\[2mm]
\gamma(t)\e^{u_3(t)-u_4(t)}-(\mu(t)+\eta(t))
\end{array}
\right].
\]
Consider also the projectors $P:X\to X$ and $Q:Z\to Z$ given by
$$ Pu = \dfrac{1}{\omega} \int_0^\omega u(t) \dt \quad \quad \text{and} \quad \quad
Qz = \dfrac{1}{\omega} \int_0^\omega z(t) \dt.$$
Note that $\imagem P = \ker \mathcal L = \R^4$, that
$$\ker Q = \imagem \mathcal L = \imagem (I-Q)
= \left\{z \in Z: \dfrac{1}{\omega} \int_0^\omega z(t) \dt =0\right\},$$
that $\mathcal L$ is a Fredholm mapping of index zero (since
$\dim \ker \mathcal L =\codim \imagem \mathcal L =4$) and that $\imagem \mathcal L$
is closed in $X$.

Consider the generalized inverse of $\mathcal L$,
$\mathcal K_p: \imagem \mathcal L \to D \cap \ker P$, given by
$$\mathcal K_p z(t)=\int_0^t z(s)\ds - \dfrac{1}{\omega} \int_0^\omega \int_0^r z(s) \ds \dr,$$
$t \in [0,\omega]$, the operator $Q \mathcal N:X \to Z$ given by
\[
Q \mathcal N u(t) =
\left[
\begin{array}{c}
\dfrac{1}{\omega} \dint_0^\omega \frac{\Lambda(t)}{\e^{u_1(t)}} - \beta(t)
\varphi(\e^{u_1},w,\e^{u_3})\e^{-u_1(t)} + \frac{\eta(t)\e^{u_4(t)}}{\e^{u_1(t)}} \dt  -\bar \mu\\[2mm]
\dfrac{1}{\omega} \dint_0^\omega  \beta(t) \varphi(\e^{u_1},w,\e^{u_3}) \e^{-u_2(t)} \dt
-(\bar \mu+\bar \epsilon)\\[2mm]
\dfrac{1}{\omega} \dint_0^\omega \epsilon(t)\e^{u_2(t)-u_3(t)} \dt - (\bar\mu+\bar\gamma)\\[2mm]
\dfrac{1}{\omega} \dint_0^\omega \gamma(t)\e^{u_3(t)-u_4(t)} \dt -(\bar\mu+\bar\eta)
\end{array}
\right].
\]
and the mapping $\mathcal K_p (I-Q)\mathcal N:X \to D \cap \ker P$ given by
    $$\mathcal K_p (I-Q) \mathcal N u(t) = A_1(t)-A_2(t)-A_3(t)$$
where
\[
A_1(t)=
\left[
\begin{array}{c}
\dint_0^t \frac{\Lambda(t)}{\e^{u_1(t)}} - \beta(t)
\varphi(\e^{u_1},w,\e^{u_3})
\e^{-u_1(t)} + \frac{\eta(t)\e^{u_4(t)}}{\e^{u_1(t)}} - \mu(t) \dt \\[2mm]
\dint_0^t \beta(t) \varphi(\e^{u_1},w,\e^{u_3})  \e^{-u_2(t)}
-( \mu(t)+ \epsilon(t)) \dt
\\[2mm]
\dint_0^t \epsilon(t)\e^{u_2(t)-u_3(t)} - (\mu(t)+\gamma(t)) \dt \\[2mm]
\dint_0^t  \gamma(t)\e^{u_3(t)-u_4(t)} -(\mu(t)+\eta(t)) \dt
\end{array}
\right],
\]
\[
A_2(t)=
\left[
\begin{array}{c}
\dfrac{1}{\omega} \dint_0^\omega \dint_0^t \frac{\Lambda(s)}{\e^{u_1(s)}} - \beta(s)
\varphi(\e^{u_1},w,\e^{u_3}) \e^{-u_1(s)} + \frac{\eta(s)\e^{u_4(s)}}{\e^{u_1(s)}} - \mu(s) \ds \dt \\[2mm]
\dfrac{1}{\omega} \dint_0^\omega \dint_0^t \beta(s) \varphi(\e^{u_1},w,\e^{u_3})  \e^{-u_2(s)}
-( \mu(s)+ \epsilon(s)) \ds \dt
\\[2mm]
\dfrac{1}{\omega} \dint_0^\omega \dint_0^t \epsilon(t)\e^{u_2(s)-u_3(s)} - (\mu(s)+\gamma(s)) \ds \dt \\[2mm]
\dfrac{1}{\omega} \dint_0^\omega \dint_0^t  \gamma(t)\e^{u_3(s)-u_4(s)} -(\mu(s)+\eta(s)) \ds \dt
\end{array}
\right]
\]
and
\[
A_3(t)=\left[\frac{t}{\omega}-\frac12\right]
\left[
\begin{array}{c}
\dint_0^\omega \frac{\Lambda(t)}{\e^{u_1(t)}} - \beta(t)
\varphi(\e^{u_1},w,\e^{u_3})
\e^{-u_1(t)} + \frac{\eta(t)\e^{u_4(t)}}{\e^{u_1(t)}} - \mu(t) \dt \\[2mm]
\dint_0^\omega \beta(t) \varphi(\e^{u_1},w,\e^{u_3})  \e^{-u_2(t)}
-( \mu(t)+ \epsilon(t)) \dt
\\[2mm]
\dint_0^\omega \epsilon(t)\e^{u_2(t)-u_3(t)} - (\mu(t)+\gamma(t)) \dt \\[2mm]
\dint_0^\omega  \gamma(t)\e^{u_3(t)-u_4(t)} -(\mu(t)+\eta(t)) \dt
\end{array}
\right].
\]
It is immediate that $Q\mathcal N$ and $\mathcal K_p (I-Q) \mathcal N$ are continuous.
 An application of Ascoli-Arzela's theorem shows that $\mathcal K_p (I-Q) \mathcal N (\overline\Omega)$
is compact for any bounded set $\Omega \subset X$. Since $Q\mathcal N (\overline\Omega)$ is bounded, we conclude that $\mathcal N$
is $L$-compact on $\Omega$ for any bounded set $\Omega \subset X$.

Let $(u_1,u_2,u_3,u_4) \in X$ be some solution of~\eqref{eq:sistema-aplic-Mawhin-lambda} for some $\lambda \in (0,1)$ and, for $i=1,2,3,4$ define
    $$u_i(\xi_i) = \min_{t \in [0,\omega]} u_i(t) \quad \quad \text{and} \quad \quad u_i(\chi_i) = \max_{t \in [0,\omega]} u_i(t).$$
From the third equation in~\eqref{eq:sistema-aplic-Mawhin-lambda} we get,
\begin{equation}\label{eq:u2-u3_xi}
\e^{u_2(\xi_2)-u_3(\xi_3)} \le \e^{u_2(\xi_3)-u_3(\xi_3)} = \dfrac{\mu(\xi_3)+\gamma(\xi_3)}{\epsilon(\xi_3)} \le \dfrac{(\mu+\gamma)^u}{\epsilon^\ell}
\end{equation}
and
\begin{equation}\label{eq:u2-u3_eta}
\e^{u_2(\chi_2)-u_3(\chi_3)} \ge \e^{u_2(\chi_3)-u_3(\chi_3)} = \dfrac{\mu(\chi_3)+\gamma(\chi_3)}{\epsilon(\chi_3)} \ge \dfrac{(\mu+\gamma)^\ell}{\epsilon^u}.
\end{equation}
From the second equation in~\eqref{eq:sistema-aplic-Mawhin-lambda}, P\ref{Cond-P3}) and~\eqref{eq:u2-u3_xi}, we obtain
\[
\begin{split}
\e^{u_1(\xi_1)} \le \e^{u_1(\xi_2)}
& = \dfrac{(\mu+\epsilon)^u}{\beta^\ell} \dfrac{\e^{u_1(\xi_2)+u_3(\xi_2)}}{\varphi(\e^{u_1(\xi_2)},w(\xi_2),\e^{u_3(\xi_2)})} \e^{u_2(\xi_2)-u_3(\xi_2)} \\
& \le \dfrac{(\mu+\epsilon)^u}{\beta^\ell} \dfrac{\e^{u_1(\xi_2)+u_3(\xi_2)}}{\varphi(\e^{u_1(\xi_2)},w(\xi_2),\e^{u_3(\xi_2)})} \dfrac{(\mu+\gamma)^u}{\epsilon^\ell} \\
& \le \dfrac{(\mu+\epsilon)^u(\mu+\gamma)^u}{c_1\beta^\ell\epsilon^\ell}
\end{split}
\]
and, by the second equation in~\eqref{eq:sistema-aplic-Mawhin-lambda}, P\ref{Cond-P3}) and~\eqref{eq:u2-u3_eta}, we get
\begin{equation}\label{eq:min_u1_eta1}
\begin{split}
\e^{u_1(\chi_1)} \ge \e^{u_1(\chi_2)}
& = \dfrac{(\mu+\epsilon)^\ell}{\beta^u} \dfrac{\e^{u_1(\chi_2)+u_3(\chi_2)}}{\varphi(\e^{u_1(\chi_2)},w(\chi_2),\e^{u_3(\chi_2)})} \e^{u_2(\chi_2)-u_3(\chi_2)} \\
& \ge \dfrac{(\mu+\epsilon)^\ell}{\beta^u} \dfrac{\e^{u_1(\xi_2)+u_3(\xi_2)}}{\varphi(\e^{u_1(\xi_2)},w(\xi_2),\e^{u_3(\xi_2)})} \dfrac{(\mu+\gamma)^\ell}{\epsilon^u} \\
& \ge \dfrac{(\mu+\epsilon)^\ell(\mu+\gamma)^\ell}{c_2\beta^u\epsilon^u}.
\end{split}
\end{equation}
Define
\begin{equation}\label{eq:starA1}
A_{1\xi}=\dfrac{(\mu+\epsilon)^u(\mu+\gamma)^u}{c_1\beta^\ell\epsilon^\ell} \quad \text{and} \quad A_{1\chi}=\dfrac{(\mu+\epsilon)^\ell(\mu+\gamma)^\ell}{c_2\beta^u\epsilon^u}.
\end{equation}
From the fourth equation in~\eqref{eq:sistema-aplic-Mawhin-lambda} we get
\[
\e^{u_3(\xi_3)} \le \e^{u_3(\chi_4)-u_4(\chi_4)+u_4(\chi_4)}
=\frac{\mu(\chi_4)+\eta(\chi_4)}{\gamma(\chi_4)} \e^{u_4(\chi_4)} \le \frac{(\mu+\eta)^u}{\gamma^\ell} \e^{u_4(\chi_4)}
\]
and
\[
\e^{u_3(\chi_3)} \ge \e^{u_3(\xi_4)-u_4(\xi_4)}\e^{u_4(\xi_4)} = \frac{\mu(\xi_4)
+\eta(\xi_4)}{\gamma(\xi_4)}\e^{u_4(\xi_4)} \ge \frac{(\mu+\eta)^\ell}{\gamma^u} \e^{u_4(\xi_4)}.
\]
Thus we obtain
\begin{equation}\label{eq:maj_u4-xi4}
\e^{u_4(\xi_4)} \le \frac{\gamma^u}{(\mu+\eta)^\ell} \e^{u_3(\chi_3)}
\quad\quad \text{and} \quad \quad
\e^{u_4(\chi_4)} \ge \frac{\gamma^\ell}{(\mu+\eta)^u} \e^{u_3(\xi_3)}.
\end{equation}

From the first equation in~\eqref{eq:sistema-aplic-Mawhin-lambda} we have
$$\beta(\chi_1) \varphi\left(\e^{(u_1(\chi_1))},w(\chi_1),\e^{u_3(\chi_1)}\right)
= \Lambda(\chi_1) -\mu(\chi_1)\e^{u_1(\chi_1)} + \eta(\chi_1)\e^{u_4(\chi_1)}.$$
Using~\eqref{eq:min_u1_eta1} and~\eqref{eq:bounds_w}, the right hand expression can be bounded by
\begin{equation}\label{eq:boundS1}
\begin{split}
\Lambda(\chi_1) -\mu(\chi_1)\e^{u_1(\chi_1)} + \eta(\chi_1)\e^{u_4(\chi_1)}
& \le \Lambda^u -\mu^\ell\e^{u_1(\chi_1)} + \eta^u \e^{u_4(\chi_1)}\\
& \le \Lambda^u + \eta^u \frac{\Lambda^u}{\mu^\ell}
\end{split}
\end{equation}
and, by \eqref{eq:min_u1_eta1}, we obtain
\begin{equation}\label{eq:boundS2}
\begin{split}
\beta(\chi_1) \varphi\left(\e^{u_1(\chi_1)},w(\chi_1),\e^{u_3(\chi_1)}\right)
& \ge \beta^\ell c_1
\e^{u_1(\chi_1)+u_3(\chi_1)} \\
& \ge  \dfrac{\beta^\ell c_1 (\mu+\epsilon)^\ell(\mu+\gamma)^\ell}{c_2\beta^u\epsilon^u} \e^{u_3(\xi_3)}.
\end{split}
\end{equation}
By~\eqref{eq:boundS1}~and~\eqref{eq:boundS2} we get
\begin{equation}\label{eq:exp_u3_xi3}
\e^{u_3(\xi_3)}
\le \dfrac{c_2(1 + \eta^u/\mu^\ell)\Lambda^u\beta^u\epsilon^u}{c_1\beta^\ell (\mu+\epsilon)^\ell(\mu+\gamma)^\ell}.
\end{equation}
By hypothesis~\ref{cond-C0}), we have $\mathcal R_0>1$ and thus, by Theorem~\ref{teo:persist-infectives}, there is $K^\ell>0$ such that
\begin{equation}\label{eq:bound-I-R0>1}
\liminf_{t \to +\infty} I(t) \ge K^\ell.
\end{equation}
Thus $\e^{u_3(t)}\ge K^\ell$. Define
\begin{equation}\label{eq:starA3-1}
  A_{3\xi}=\dfrac{c_2(1 + \eta^u/\mu^\ell)\Lambda^u\beta^u\epsilon^u}{c_1\beta^\ell (\mu+\epsilon)^\ell(\mu+\gamma)^\ell}
  \quad \text{and} \quad A_{3\chi}=K^\ell.
\end{equation}

Using~\eqref{eq:bound-I-R0>1},~\eqref{eq:bounds_w} and~\eqref{eq:maj_u4-xi4} and again the fact that $\cR_0>1$, we obtain bounds for $\e^{u_4(t)}$, namely
$$\e^{u_4(\xi_4)} \le \frac{\gamma^u}{(\mu+\eta)^\ell} \dfrac{\Lambda^u}{\mu^\ell}
\quad\quad \text{and} \quad \quad
\e^{u_4(\chi_4)} \ge \frac{\gamma^\ell}{(\mu+\eta)^u} \e^{u_3(\xi_3)} \ge \frac{\gamma^\ell}{(\mu+\eta)^u} K^\ell.$$
Define
\begin{equation}\label{eq:starA4}
A_{4\xi} = \frac{\gamma^u}{(\mu+\eta)^\ell} \dfrac{\Lambda^u}{\mu^\ell} \quad \text{and} \quad
A_{4\chi} = \frac{\gamma^\ell}{(\mu+\eta)^u} K^\ell.
\end{equation}

By the third equation in~\eqref{eq:ProblemaPrincipal},~\eqref{eq:exp_u3_xi3} and~\eqref{eq:bound-I-R0>1} we get
$$\e^{u_2(\xi_2)} \le \e^{u_2(\xi_3)-u_3(\xi_3)} \e^{u_3(\xi_3)} \le \dfrac{(\mu+\gamma)^u}{\epsilon^\ell} A_{3\xi}$$
and
$$\e^{u_2(\chi_2)} \ge \e^{u_2(\chi_3)-u_3(\chi_3)} \e^{u_3(\chi_3)} \ge \dfrac{(\mu+\gamma)^\ell}{\epsilon^u} A_{3\chi}.$$
Using~\eqref{eq:starA3-1}, we can establish bounds for $\e^{u_2(t)}$. In fact, we have $\e^{u_2(\xi_2)}\le A_{2\xi}$ and $\e^{u_2(\chi_2)}\ge A_{2\chi}$, where
\begin{equation}\label{eq:starA2-1}
A_{2\xi}= \dfrac{c_2(\mu+\gamma)^\ell(1 + \eta^u/\mu^\ell)\Lambda^u\beta^u\epsilon^u}{c_1\epsilon^\ell\beta^\ell (\mu+\epsilon)^\ell(\mu+\gamma)^\ell}
\end{equation}
and
\begin{equation}\label{eq:starA2-2}
A_{2\chi} = \dfrac{(\mu+\gamma)^\ell}{\epsilon^u}K^\ell.
\end{equation}

By~\eqref{eq:starA1}, \eqref{eq:starA3-1}, \eqref{eq:starA3-1}, \eqref{eq:starA4}, \eqref{eq:starA2-1}, \eqref{eq:starA2-2} we obtain, for $i=1,\ldots,4$,
\begin{equation}\label{eq:ui_le_ln(Ai)}
u_i(\xi_i) \le \ln A_{i\xi} \quad \quad \text{and} \quad \quad u_i(\chi_i) \ge \ln A_{i\chi}.
\end{equation}

Integrating in $[0,\omega]$ the last three equations in~\eqref{eq:sistema-aplic-Mawhin-lambda} we obtain
\begin{equation}\label{eq:qualquer-coisa-1}
\int_0^\omega \beta(t)\,\varphi\left(\e^{u_1(t)},w(t),\e^{u_3(t)}\right)\e^{-u_2(t)} \dt = (\bar \mu + \bar\epsilon)\omega,
\end{equation}
\begin{equation}\label{eq:qualquer-coisa-2}
\int_0^\omega \epsilon(t)\e^{u_2(t)-u_3(t)} \dt = (\bar\mu+\bar\gamma)\omega
\end{equation}
and
\begin{equation}\label{eq:qualquer-coisa-3}
\int_0^\omega \gamma(t)\e^{u_3(t)-u_4(t)} = (\bar\mu+\bar\eta)\omega.
\end{equation}

By~\eqref{eq:ui_le_ln(Ai)} and~\eqref{eq:qualquer-coisa-1} and using the fact that $\lambda\in(0,1)$, we get
\[
\begin{split}
u_2(t)
& = u_2(\xi_2) + \int_{\xi_2}^t u_2'(s) \ds \le u_2(\xi_2) + \int_0^\omega |u_2'(t)| \dt \\
& = u_2(\xi_2) + \lambda \int_0^\omega
\left| \beta(t)\,\varphi\left(\e^{u_1(t)},w(t),\e^{u_3(t)}\right)\e^{-u_2(t)}
-(\mu(t)+\epsilon(t))  \right| \dt \\
& \le \ln A_{2\xi} + 2 \int_0^\omega
\beta(t)\,\varphi\left(\e^{u_1(t)},w(t),\e^{u_3(t)}\right)\e^{-u_2(t)} \dt \\
& \le \ln A_{2\xi} +2(\bar \mu + \bar\epsilon)\omega,
\end{split}
\]
and also
\[
\begin{split}
u_2(t)
& \ge u_2(\chi_2) - \int_0^\omega |u_2'(t)| \dt \\
& = u_2(\chi_2) - \int_0^\omega
\left| \beta(t)\,\varphi\left(\e^{u_1(t)},w(t),\e^{u_3(t)}\right)\e^{-u_2(t)}
-(\mu(t)+\epsilon(t))  \right| \dt \\
& \ge \ln A_{2\chi} - 2(\bar \mu + \bar\epsilon)\omega.
\end{split}
\]
By~\eqref{eq:ui_le_ln(Ai)} and~\eqref{eq:qualquer-coisa-2} and using the fact that $\lambda\in(0,1)$,  we obtain
\begin{equation}\label{eq:qualquer-coisa-4}
\begin{split}
u_3(t)
& \le u_3(\xi_3) + \int_0^\omega |u_3'(t)| \dt
= u_3(\xi_3) + \lambda \int_0^\omega
\left| \epsilon(t)\e^{u_2-u_3} -(\mu(t)+\gamma(t)) \right| \dt \\
& \le \ln A_{3\xi} + 2 \int_0^\omega \epsilon(t)\e^{u_2-u_3} \dt
\le \ln A_{3\xi} +2(\bar \mu + \bar\gamma)\omega,
\end{split}
\end{equation}
and also
\[
\begin{split}
u_3(t)
& \ge u_3(\chi_3) - \int_0^\omega |u_3'(t)| \dt
= u_3(\chi_3) - \lambda \int_0^\omega
\left| \epsilon(t)\e^{u_2-u_3} -(\mu(t)+\gamma(t)) \right| \dt \\
& \ge \ln A_{3\chi} - 2 \int_0^\omega \epsilon(t)\e^{u_2-u_3} \dt
\ge \ln A_{3\chi} -2(\bar \mu + \bar\gamma)\omega.
\end{split}
\]
Similarly, by~\eqref{eq:ui_le_ln(Ai)} and~\eqref{eq:qualquer-coisa-3} and using the fact that $\lambda\in(0,1)$, we conclude that
\[
\begin{split}
u_4(t)
& \le u_4(\xi_4) + \int_0^\omega |u_4'(t)| \dt
 = u_4(\xi_4) + \lambda \int_0^\omega
\left| \gamma(t)\e^{u_3-u_4}-(\mu(t)+\eta(t)) \right| \dt \\
& \le \ln A_{4\xi} + 2 \int_0^\omega \gamma(t)\e^{u_3-u_4} \dt
\le \ln A_{4\xi} +2(\bar\mu+\bar\eta)\omega
\end{split}
\]
and also that
\[
\begin{split}
u_4(t)
& \ge u_4(\chi_4) - \int_0^\omega |u_4'(t)| \dt
= u_4(\chi_4) - \lambda \int_0^\omega
\left| \gamma(t)\e^{u_3-u_4}-(\mu(t)+\eta(t)) \right| \dt \\
& \ge \ln A_{4\chi} - 2 \int_0^\omega \gamma(t)\e^{u_3-u_4} \dt
\ge \ln A_{4\chi} -2(\bar\mu+\bar\eta)\omega.
\end{split}
\]
Finally, integrating the first equation of~\eqref{eq:sistema-aplic-Mawhin-lambda} in $[0,\omega]$ and using~\eqref{eq:ui_le_ln(Ai)} and~\eqref{eq:qualquer-coisa-4}, we obtain
\[
\begin{split}
\int_0^\omega  \Lambda(t)\e^{-u_1}+\eta(t)\e^{u_4-u_1}\dt
& = \int_0^\omega \beta(t)\,\varphi\left(\e^{u_1(t)},w(t),\e^{u_3(t)}\right)\e^{-u_1(t)} +\mu(t) \dt \\
& = \int_0^\omega \beta(t)\,
\dfrac{\varphi\left(\e^{u_1(t)},w(t),\e^{u_3(t)}\right)}{\e^{u_1(t)+u_3(t)}}\e^{u_3(t)} +\mu(t) \dt \\
& \le \left( \bar\beta c_2 A_{3\xi}\e^{-2(\bar\mu+\bar\gamma)\omega}+ \bar\mu \right) \omega,
\end{split}
\]
and thus
\[
\begin{split}
u_1(t)
& \le u_1(\xi_1) + \int_0^\omega |u_1'(t)| \dt \\
& = u_1(\xi_1) + \lambda \int_0^\omega
\left| \Lambda(t)\e^{-u_1}-\beta(t)\,\frac{C(w)}{w} \e^{u_3} -\mu(t)+\eta(t)\e^{u_4-u_1} \right| \dt \\
& \le \ln A_{1\xi} + 2 \int_0^\omega \Lambda(t)\e^{-u_1}+\eta(t)\e^{u_4-u_1} \dt \\
& \le \ln A_{1\xi} +2\left( \bar\beta c_2 A_{3\xi}\e^{-2(\bar\mu+\bar\gamma)\omega}+ \bar\mu \right) \omega
\end{split}
\]
and also
\[
\begin{split}
u_1(t)
& \ge u_1(\chi_1) - \int_0^\omega |u_1'(t)| \dt \\
& = u_1(\chi_1) - \lambda \int_0^\omega
\left| \Lambda(t)\e^{-u_1}-\beta(t)\,\frac{C(w)}{w} \e^{u_3} -\mu(t)+\eta(t)\e^{u_4-u_1} \right| \dt \\
& \ge \ln A_{1\chi} - 2 \int_0^\omega \Lambda(t)\e^{-u_1}+\eta(t)\e^{u_4-u_1} \dt \\
& \ge \ln A_{1\chi} - 2\left( \bar\beta c_2 A_{3\xi}\e^{-2(\bar\mu+\bar\gamma)\omega}+ \bar\mu \right) \omega.
\end{split}
\]

Consider the algebraic system
\begin{equation}\label{eq:algebrico}
\begin{cases}
\bar\Lambda \e^{-u_1}-\bar\beta \varphi(\e^{u_1},w,\e^{u_3}) \e^{-u_1} -\bar\mu+\bar\eta\e^{u_4-u_1} =0\\
\bar\beta \varphi(\e^{u_1},w,\e^{u_3}) \e^{-u_2} -\bar\mu-\bar\epsilon =0\\
\bar\epsilon \e^{u_2-u_3} -\bar\mu-\bar\gamma =0\\
\bar\gamma\e^{u_3-u_4}-\bar\mu-\bar\eta=0
\end{cases}.
\end{equation}
Multiplying the first equation by $\e^{u_1}$, the second by $\e^{u_2}$, the third by $\e^{u_3}$
and the fourth equation by $\e^{u_4}$ and adding the equations we conclude that any solution of this equation verifies
    $$w=\frac{\bar\Lambda}{\bar\mu}.$$
Moreover, we conclude by simple computations that the solution of system~\eqref{eq:algebrico}  verifies
\begin{equation}\label{eq:sol_e^u_2_e^u_4}
  \e^{u_2}=\frac{\bar\mu+\bar\gamma}{\bar\epsilon} \e^{u_3}
=\frac{(\bar\mu+\bar\gamma)(\bar\mu+\bar\eta)}{\bar\epsilon\bar\gamma} \e^{u_4}
\end{equation}
and also
\begin{equation}\label{eq:sol_e^u_1}
\e^{u_1} = \frac{\bar\Lambda}{\bar\mu}- \frac{(\bar\mu+\bar\gamma)(\bar\mu+\bar\epsilon)(\bar\mu+\bar\eta) -\bar\epsilon\bar\gamma\bar\eta}{\bar\epsilon\bar\mu(\bar\mu+\bar\eta)}\e^{u_3}.
\end{equation}
Thus, by the second equation in~\eqref{eq:algebrico} we get
\begin{equation}\label{eq:sol-e^u_3}
\dfrac{\bar\epsilon\bar\beta}{\bar\mu+\bar\gamma} \, \varphi\left(\bar\Lambda/\bar\mu - d \e^{u_3}, \bar\Lambda/\bar\mu, \e^{u_3}\right)\e^{-u_3}-(\bar\mu+\bar\epsilon)=0,
\end{equation}
where
\[
d=\frac{(\bar\mu+\bar\gamma)(\bar\mu+\bar\epsilon)(\bar\mu+\bar\eta) -\bar\epsilon\bar\gamma\bar\eta}{\bar\epsilon\bar\mu(\bar\mu+\bar\eta)}.
\]
By Lemma~\ref{lemma:unique-sol}, ~\eqref{eq:sol-e^u_3} has a unique solution. Therefore, by~\eqref{eq:sol_e^u_2_e^u_4} and~\eqref{eq:sol_e^u_1} we conclude that the algebraic system~\eqref{eq:algebrico} has a unique solution. Denote this solution by $p^*=(p_1^*,p_2^*,p_3^*,p_4^*)$. Let $M_0>0$ be such that $|p_1^*|+|p_2^*|+|p_3^*|+|p_4^*|<M_0$ and let
{\small{$$M_1=\max\{|\ln A_{1\xi} +2\left( \bar\beta c_2 A_{3\xi}\e^{-2(\bar\mu+\bar\gamma)\omega}+ \bar\mu \right) \omega|,
  |\ln A_{1\chi} - 2\left( \bar\beta c_2 A_{3\xi}\e^{-2(\bar\mu+\bar\gamma)\omega}+ \bar\mu \right) \omega|\},$$}}
$$M_2=\max\{|\ln A_{2\xi} +2(\bar \mu + \bar\epsilon)\omega|, |\ln A_{2\chi} - 2(\bar \mu + \bar\epsilon)\omega|\},$$
$$M_3=\max\{|\ln A_{3\xi} + 2(\bar \mu + \bar\gamma)\omega|, |\ln A_{3\chi} -2(\bar \mu + \bar\gamma)\omega|\},$$
and
$$M_4=\max\{|\ln A_{4\xi} +2(\bar\mu+\bar\eta)\omega|, |\ln A_{4\chi} -2(\bar\mu+\bar\eta)\omega|\}.$$
Define
    $$M=M_0+M_1+M_2+M_3+M_4.$$
We will apply Mawhin's Theorem in the open set
    $$\Omega=\{(u_1,u_2,u_3,u_4) \in X: \|(u_1,u_2,u_3,u_4)\|<M \}.$$
Let $u \in \partial \Omega \cap \ker \mathcal L = \partial \Omega \cap \R^4$. Then $u$ is a constant function that we can identify with the vector $(u_1,u_2,u_3,u_4) \in \R^4$ with $\|u\|=M$ and
\[
Q \mathcal N u :=
\left[
\begin{array}{c}
F_1(u)\\
F_2(u)\\
F_3(u)\\
F_4(u)
\end{array}
\right]
=
\left[
\begin{array}{c}
\bar\Lambda \e^{-u_1}-\bar\beta \varphi(\e^{u_1},w,\e^{u_3}) \e^{-u_1} -\bar\mu+\bar\eta\e^{u_4-u_1}\\
\bar\beta \varphi(\e^{u_1},w,\e^{u_3}) \e^{-u_2}  -\bar\mu-\bar\epsilon\\
\bar\epsilon \e^{u_2-u_3} -\bar\mu-\bar\gamma\\
\bar\gamma\e^{u_3-u_4}-\bar\mu-\bar\eta
\end{array}
\right] \ne 0.
\]
We conclude that
\[
\begin{split}
\deg(\Id Q \mathcal N, \partial \Omega \cap \ker L, (0,0,0,0))
& = \sum_{x \in (Id Q \mathcal N)^{-1}(0,0,0,0)} \text{sign} \det
d_x(\Id Q \mathcal N) \\
& = \text{sign} \det d_{p^*}(\Id Q \mathcal N) \\
& = \text{sign} \det \mathcal M,
\end{split}
\]
where $\mathcal M$ is the matrix in~\eqref{eq:matrixM}. By hypothesis \ref{cond-C3}) we have $\det \mathcal M \ne 0$. Thus
$$\deg(\Id Q \mathcal N u, \partial \Omega \cap \ker L, (0,0,0,0)) \ne 0.$$
According to Mawhin's continuation theorem, we conclude that equation $\mathcal L x = \mathcal N x$ has at least one solution in $D \cap \bar U$. Therefore, in the hypothesis of the theorem, we conclude that system~\eqref{eq:ProblemaPrincipal} has at least one $\omega$-periodic solution and the result follows.
\end{proof}

The following corollary shows that, when $\varphi$ does not depend explicitly on the total population, the condition $\det \mathcal M \ne 0$ is always satisfyed.

\begin{corollary}
  Let $\varphi(S,N,I)=\psi(S,I)$ and assume that it satisfies conditions P\ref{Cond-P1}) to P\ref{Cond-P5}). If $\mathcal R_0>1$ then system~\eqref{eq:ProblemaPrincipal} has an endemic periodic solution of period $\omega$.
\end{corollary}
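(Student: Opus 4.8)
The plan is to reduce the entire statement to verifying hypothesis~\ref{cond-C3}) of Theorem~\ref{teo:main}, namely $\det\mathcal M\neq0$. Indeed, hypothesis~\ref{cond-C0}) ($\mathcal R_0>1$) is assumed, so once $\det\mathcal M\neq0$ is established the existence of an endemic $\omega$-periodic solution follows immediately from Theorem~\ref{teo:main}. Thus the whole content of the corollary is the computation of $\det\mathcal M$ under the extra assumption $\varphi(S,N,I)=\psi(S,I)$.

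The first step is to exploit the structural simplification. Since $\varphi$ does not depend on $N$, we have $\partial\varphi/\partial N\equiv0$ at every point, so in the definition of $K_{abc}$ the middle term drops out. Evaluating at $(p,\bar\Lambda/\bar\mu,r)$ this gives
\[
K_{010}=0,\qquad K_{011}=\bar\beta\,\tfrac{\partial\varphi}{\partial I}(p,\bar\Lambda/\bar\mu,r),\qquad K_{110}=\bar\beta\,\tfrac{\partial\varphi}{\partial S}(p,\bar\Lambda/\bar\mu,r).
\]
Substituting $K_{010}=0$ into the matrix~\eqref{eq:matrixM}, its second column collapses to $(0,0,\bar\mu+\bar\gamma,0)^{\top}$, which is what makes the determinant tractable.

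The second step is to expand $\det\mathcal M$ along this second column. Only the $(3,2)$ entry survives, so $\det\mathcal M=-(\bar\mu+\bar\gamma)\,D$, where $D$ is the $3\times3$ minor obtained by deleting row $3$ and column $2$. The lower-left corner of $D$ carries two zeros, so only three products remain in a cofactor expansion; after the two $K_{011}K_{110}$ contributions cancel, a direct computation yields
\[
\det\mathcal M=-(\bar\mu+\bar\gamma)\,\frac{\bar\mu+\bar\eta}{q}\,\Big(\bar\mu\,K_{011}\,r+\bar\eta\,s\,K_{110}\Big).
\]
All of this is routine bookkeeping once $K_{010}=0$ has been used.

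It then remains to show the last factor is nonzero, and this is where the only real difficulty lies. The prefactors are all positive: $\bar\mu+\bar\gamma>0$, $\bar\mu+\bar\eta>0$, and $q=(\bar\mu+\bar\gamma)r/\bar\epsilon>0$ because $r>0$ by Lemma~\ref{lemma:unique-sol}. By P\ref{Cond-P4}) both $K_{011}$ and $K_{110}$ are non-negative while $\bar\eta,s\geq0$, so the bracket is non-negative; hence $\det\mathcal M\neq0$ as soon as $K_{011}>0$, i.e. $\partial\varphi/\partial I(p,\bar\Lambda/\bar\mu,r)>0$. I expect this strict positivity to be \emph{the main obstacle}, since P\ref{Cond-P4}) only gives $\partial\varphi/\partial I\ge0$. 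To secure it I would argue from the equation~\eqref{eq:sol-e^u_3} that $r$ solves, which forces $\varphi(p,\bar\Lambda/\bar\mu,r)>0$ with $p\in\,]0,\bar\Lambda/\bar\mu[$, combined with P\ref{Cond-P3}) and P\ref{Cond-P5}): since $\varphi(p,\bar\Lambda/\bar\mu,I)\ge c_1\,p\,I>0$ for $I>0$ while $\varphi(p,\bar\Lambda/\bar\mu,0)=0$, the map $I\mapsto\varphi(p,\bar\Lambda/\bar\mu,I)$ is increasing at the relevant interior point, giving $K_{011}>0$ (for the standard incidence functions listed in Section~\ref{section:NP} this is immediate). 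With $\det\mathcal M\neq0$ established, Theorem~\ref{teo:main} yields the endemic $\omega$-periodic solution.
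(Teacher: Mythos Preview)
Your approach matches the paper's exactly: reduce to Theorem~\ref{teo:main}, use $\partial\varphi/\partial N\equiv0$ to get $K_{010}=0$, compute $\det\mathcal M$, and check the sign of the bracket via P\ref{Cond-P4}). Your formula agrees with the paper's~\eqref{eq:no-total-pop-cond-ne-0} (up to a factor of $\bar\beta$ absorbed into your $K_{abc}$, which the paper appears to drop).

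You are right to flag the strict positivity of $\bar\mu r K_{011}+\bar\eta s K_{110}$ as the delicate point; the paper simply asserts that $\det\mathcal M\ne0$ follows from P\ref{Cond-P4}), which by itself gives only non-negativity. However, your proposed fix does not quite close the gap either: the inequality $\varphi(p,\bar\Lambda/\bar\mu,I)\ge c_1 p I$ from P\ref{Cond-P3}) forces $\partial\varphi/\partial I>0$ at $I=0$, not at the interior point $I=r$, and P\ref{Cond-P5}) yields only the upper bound $\partial\varphi/\partial I\le\varphi/I$. In principle a smooth $\psi(S,I)$ satisfying P\ref{Cond-P1})--P\ref{Cond-P5}) could have $\partial\psi/\partial I(p,r)=0$ (and if $\bar\eta=0$ the $K_{110}$ term drops as well). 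So this remains a small lacuna shared by your argument and the paper's; as you note, for the concrete incidence functions listed in Section~\ref{section:NP} the strict positivity is immediate.
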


\begin{proof}
We are assuming that $\mathcal R_0>1$ and thus we have condition~\ref{cond-C0}) in Theorem~\ref{teo:main}. Some computations yield
\begin{equation}\label{eq:no-total-pop-cond-ne-0}
\det \mathcal M
=  -\frac{(\bar\eta+\bar\mu)(\bar\gamma+\bar\mu)}{q}\left( \bar\eta s \frac{\partial \phi}{\partial S}(p,\bar\Lambda/\bar\mu,r)
 + \bar\mu r
 \frac{\partial \phi}{\partial I}(p,\bar\Lambda/\bar\mu,r)\right).
\end{equation}
By P\ref{Cond-P4}) and~\eqref{eq:no-total-pop-cond-ne-0} we have $\det \mathcal M \ne 0$. Thus, condition~\ref{cond-C3}) in Theorem~\ref{teo:main} holds. The result follows from Theorem~\ref{teo:main}.
\end{proof}

The following is an immediate corollary of the previous one.

\begin{corollary}[Simple incidence functions]\label{cor:simple}
  Let $\varphi(S,N,I)=SI$. If $\mathcal R_0>1$ then system~\eqref{eq:ProblemaPrincipal} has an endemic periodic solution of period $\omega$.
\end{corollary}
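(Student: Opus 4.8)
The plan is to recognize that the simple (mass-action) incidence $\varphi(S,N,I)=SI$ is a special case of the $N$-independent incidence already treated in the previous corollary, and then to invoke that result directly. Concretely, I would set $\psi(S,I)=SI$ so that $\varphi(S,N,I)=\psi(S,I)$ does not depend explicitly on the total population $N$.

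The only work is to confirm that this $\psi$ satisfies hypotheses P\ref{Cond-P1}) through P\ref{Cond-P5}). First I would dispatch P\ref{Cond-P1}) and P\ref{Cond-P2}): the map $(S,I)\mapsto SI$ is plainly $C^1$ and vanishes whenever $S=0$ or $I=0$. For P\ref{Cond-P3}), since $\varphi(S,N,I)/(SI)=1$ identically, one may take $c_1=c_2=1$. For P\ref{Cond-P4}) the partial derivatives are $\frac{\partial\varphi}{\partial S}=I\ge 0$, $\frac{\partial\varphi}{\partial I}=S\ge 0$ and $\frac{\partial\varphi}{\partial N}=0$, which give the required monotonicities in $S$, $I$ and $N$. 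Finally P\ref{Cond-P5}) holds because $I\mapsto\varphi(S,N,I)/I=S$ is constant, hence non-increasing.

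With the hypotheses verified and $\mathcal R_0>1$ assumed, the previous corollary applies verbatim and produces an endemic $\omega$-periodic solution of~\eqref{eq:ProblemaPrincipal}. I do not anticipate any genuine obstacle: all the substance of the argument is carried by the previous corollary, whose point is that the non-degeneracy condition $\det\mathcal M\ne 0$ of Theorem~\ref{teo:main} is automatic for incidence functions independent of $N$ (see~\eqref{eq:no-total-pop-cond-ne-0} together with P\ref{Cond-P4})). The present statement is then just the observation that mass-action incidence is the simplest incidence of this type.
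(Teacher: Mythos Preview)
Your proposal is correct and follows exactly the paper's approach: the paper simply states that this result is an immediate corollary of the previous one, and you have carried out precisely that reduction, additionally spelling out the routine verification of P\ref{Cond-P1})--P\ref{Cond-P5}) for $\varphi(S,N,I)=SI$.
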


In~\cite{Zhang-Liu-Teng-AM-2012} it is discussed the existence of periodic orbits for a model with mass-action incidence and disease induced mortality. When the disease induced mortality is set to zero (letting $\alpha\equiv 0$), the model considered in~\cite{Zhang-Liu-Teng-AM-2012} becomes a particular case of ours. For the no disease induced mortality case, Corollary~\ref{cor:simple} improves the main result in~\cite{Zhang-Liu-Teng-AM-2012}.

The next corollary shows that, in the case of Michaelis-Menten incidence, the condition $\det \mathcal M \ne 0$ is also always satisfied.

\begin{corollary}[Michaelis-Menten incidence functions]\label{cor:Mic-Ment}
  Let $\varphi(S,N,I)=\frac{C(N)}{N}SI$ and assume that $N \mapsto C(N)$ is continuously differentiable and positive and that $N \to C(N)/N$ is non-increasing. If $\mathcal R_0>1$ then system~\eqref{eq:ProblemaPrincipal} has an endemic periodic solution of period $\omega$.
\end{corollary}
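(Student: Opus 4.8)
The plan is to verify the two hypotheses of Theorem~\ref{teo:main}. Hypothesis~\ref{cond-C0}) is precisely the standing assumption $\cR_0>1$, so the whole task reduces to checking hypothesis~\ref{cond-C3}), that $\det\mathcal M\ne 0$. First I would record that $\varphi(S,N,I)=\frac{C(N)}{N}SI$ satisfies P\ref{Cond-P1}) to P\ref{Cond-P5}). Writing $g(N)=C(N)/N$, one has $\varphi/(SI)=g(N)$, which is continuous and positive and hence lies between two positive constants on the compact interval $[\Lambda^\ell/\mu^u,\Lambda^u/\mu^\ell]$, giving P\ref{Cond-P3}); the partial derivatives are $\frac{\partial\varphi}{\partial S}=g(N)I\ge 0$, $\frac{\partial\varphi}{\partial I}=g(N)S\ge 0$ and $\frac{\partial\varphi}{\partial N}=g'(N)SI\le 0$, the last sign being exactly the hypothesis that $N\mapsto C(N)/N$ is non-increasing, so P\ref{Cond-P4}) holds; and $\varphi/I=g(N)S$ is independent of $I$, whence P\ref{Cond-P5}).

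The genuine difficulty, and the reason the previous corollary does not apply, is that $\varphi$ now depends on $N$, so $\frac{\partial\varphi}{\partial N}$ enters $\mathcal M$; moreover $\frac{\partial\varphi}{\partial N}\le 0$ has the opposite sign to $\frac{\partial\varphi}{\partial S},\frac{\partial\varphi}{\partial I}\ge 0$, so a naive sign count on $\det\mathcal M$ is inconclusive. The key observation is that $\det\mathcal M$ does not depend on $\frac{\partial\varphi}{\partial N}$ at all. Indeed $\mathcal M=X^{-1}JX$, where $J$ is the Jacobian of the averaged field at the endemic equilibrium and $X$ is the diagonal matrix with entries $\e^{u_i}$ there, so $\det\mathcal M=\det J$. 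Summing the four equations of~\eqref{eq:ProblemaPrincipal} gives $N'=\Lambda(t)-\mu(t)N$ irrespective of $\varphi$; hence the row vector $\mathbf 1^{\top}=(1,1,1,1)$ is a left eigenvector of $J$ with eigenvalue $-\bar\mu\ne 0$, and the same holds for the matrix $J_0$ obtained from $J$ by setting $\frac{\partial\varphi}{\partial N}=0$. Since $\partial N/\partial x_j=1$ for every compartment $x_j$ and $\varphi$ enters the $S$-equation with a minus sign and the $E$-equation with a plus sign, all the $\frac{\partial\varphi}{\partial N}$-terms assemble into the rank-one matrix $\bar\beta\frac{\partial\varphi}{\partial N}\,(e_2-e_1)\mathbf 1^{\top}$, with $e_1,e_2$ the first two standard basis vectors. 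By the matrix-determinant lemma and $\mathbf 1^{\top}J_0=-\bar\mu\,\mathbf 1^{\top}$, passing from $J_0$ to $J$ changes the determinant by a multiple of $\mathbf 1^{\top}(e_2-e_1)=0$; thus $\det J=\det J_0$.

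I may therefore evaluate $\det\mathcal M$ as if $\frac{\partial\varphi}{\partial N}=0$, i.e.\ for an $N$-independent incidence. Here the multiplicative structure is decisive: linearity in $I$ gives $\frac{\partial\varphi}{\partial I}=\varphi/I$, so the equation~\eqref{eq:sol-e^u_3-lemma} defining $r$ forces $\bar\beta\frac{\partial\varphi}{\partial I}(p,\bar\Lambda/\bar\mu,r)=\bar\beta\varphi/r=(\bar\mu+\bar\gamma)(\bar\mu+\bar\epsilon)/\bar\epsilon$. Substituting this relation, together with $q=(\bar\mu+\bar\gamma)r/\bar\epsilon$ and $s=\bar\gamma r/(\bar\mu+\bar\eta)$, makes all terms but one collapse and leaves
\[
\det\mathcal M=\bar\beta\,\frac{\partial\varphi}{\partial S}(p,\bar\Lambda/\bar\mu,r)\,\big[(\bar\mu+\bar\epsilon)(\bar\mu+\bar\gamma)(\bar\mu+\bar\eta)-\bar\epsilon\bar\gamma\bar\eta\big].
\]
Both factors are strictly positive: $\frac{\partial\varphi}{\partial S}=g(\bar\Lambda/\bar\mu)\,r>0$ because $g>0$ and $r>0$, while the bracket is positive since $\bar\mu>0$ makes each of $\bar\mu+\bar\epsilon,\bar\mu+\bar\gamma,\bar\mu+\bar\eta$ strictly exceed $\bar\epsilon,\bar\gamma,\bar\eta$. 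Hence $\det\mathcal M\ne 0$, hypothesis~\ref{cond-C3}) holds, and Theorem~\ref{teo:main} produces the endemic $\omega$-periodic solution.

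The step I expect to be most delicate is the conservation/rank-one cancellation: one must confirm that every appearance of $\frac{\partial\varphi}{\partial N}$ in $J$ falls inside the single rank-one pattern above, which amounts to noting that $\frac{\partial\varphi}{\partial N}$ enters $J$ only as the term $\frac{\partial\varphi}{\partial N}\,\partial N/\partial x_j=\frac{\partial\varphi}{\partial N}$, identical across all columns, in the two rows coming from the $S$- and $E$-equations. If the eigenvector argument feels slippery, the fallback is to expand $\det\mathcal M$ directly along its two sparse rows and simplify with $q=(\bar\mu+\bar\gamma)r/\bar\epsilon$, $s=\bar\gamma r/(\bar\mu+\bar\eta)$ and the equation for $r$; this reaches the same closed form by a longer but routine computation.
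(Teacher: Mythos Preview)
Your proof is correct and takes a genuinely different route from the paper's. The paper expands $\det\mathcal M$ with the Michaelis--Menten derivatives $C(N)$, $C'(N)$ in place and then simplifies using the relation $p+q+r+s=\bar\Lambda/\bar\mu$. You instead exploit the conservation identity $N'=\Lambda(t)-\mu(t)N$: since $\mathbf 1^\top J=-\bar\mu\,\mathbf 1^\top$ for the Jacobian $J$ of the averaged field, the rank-one block $\bar\beta\varphi_N\,(e_2-e_1)\mathbf 1^\top$ carrying every occurrence of $\varphi_N$ drops out of $\det J$ via the matrix-determinant lemma. This is cleaner and more informative --- it shows that $\det\mathcal M$ is insensitive to $\varphi_N$ for \emph{any} admissible incidence, a structural fact hidden by the direct computation. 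After the reduction, the linearity of $\varphi$ in $I$ gives $\bar\epsilon\bar\beta\varphi_I=(\bar\mu+\bar\epsilon)(\bar\mu+\bar\gamma)$ at the endemic point, collapsing the determinant to your closed form.

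One caveat worth recording: your identity $\mathcal M=X^{-1}JX$ is for the genuine Jacobian $d_{p^*}(Q\mathcal N)$, whose $(2,2)$ entry works out to $K_{010}-(\bar\mu+\bar\epsilon)$; the matrix displayed in~\eqref{eq:matrixM} carries only $K_{010}$ there, an apparent misprint. This is why you obtain $\det\mathcal M>0$ while the paper's direct expansion yields a negative expression. Either sign suffices for hypothesis~\ref{cond-C3}), and your computation is the one consistent with what Mawhin's theorem actually requires, so the argument stands.
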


\begin{proof}
In this case we have
\[
\begin{split}
\det \mathcal M
& =
-\dfrac{\bar\beta(\eta+\mu)(\gamma+\mu)}{q}
\left(
\dfrac{\partial \phi}{\partial N}(p,\bar\Lambda/\bar\mu,r)
\left(\mu r+\eta s+\mu q+\mu s\right)\right.\\
& \left. \quad
+ \eta s \dfrac{\partial \phi}{\partial S}(p,\bar\Lambda/\bar\mu,r)
+ \mu r \dfrac{\partial \phi}{\partial I}(p,\bar\Lambda/\bar\mu,r)
\right)\\
& =
-\dfrac{\bar\beta(\eta+\mu)(\gamma+\mu)}{q}
\left(
\dfrac{C'(\bar\Lambda/\bar\mu)}{\bar\Lambda/\bar\mu}
pr\left(\mu r+\eta s+\mu q+\mu s\right)\right.\\
& \quad \left.-\dfrac{C(\bar\Lambda/\bar\mu)}{\bar\Lambda^2/\bar\mu^2}
pr\left(\mu r+\eta s+\mu q+\mu s\right)+\dfrac{C(\bar\Lambda/\bar\mu)}{\bar\Lambda/\bar\mu}r(s\eta+p\mu)
\right).
\end{split}
\]
Since $p<\bar\Lambda/\bar\mu$ and $r+q+s=\bar\Lambda/\bar\mu-p$, we have
\[
\begin{split}
\det \mathcal M
& =
-\dfrac{\bar\beta(\eta+\mu)(\gamma+\mu)}{q}
\left(
\dfrac{C'(\bar\Lambda/\bar\mu)}{\bar\Lambda/\bar\mu}
pr\left(\mu r+\eta s+\mu q+\mu s\right)\right.\\
& \quad \left. +\dfrac{C(\bar\Lambda/\bar\mu)}{\bar\Lambda/\bar\mu}r(p^2\bar\mu^2/\bar\Lambda^2+ \eta s(1-p/(\bar\Lambda/\bar\mu)))
\right) < 0.
\end{split}
\]
Thus, $\det \mathcal M \ne 0$ and the result follows.
\end{proof}

We obtain again, as a particular case of Corollary~\ref{cor:Mic-Ment}, the result in Corollary~\ref{cor:simple}.

\section{A family of examples}

In this section we restrict our attention to the family of periodic systems
\begin{equation}\label{eq:ProblemaEndemic}
\begin{cases}
S'=\Lambda-\beta[1+a\cos(2\pi t +\phi)]\,SI-\mu S\\
E'=\beta[1+a\cos(2\pi t +\phi)]\,SI-\mu E+\epsilon E\\
I'=\epsilon E -\mu I-\gamma I \\
R'=\gamma I-\mu R \\
N=S+E+I+R
\end{cases}.
\end{equation}
In~\cite{Bacaer-BMB-2007} (see equation (51)), it was shown that, for small $b$, we have
    \begin{equation}\label{eq:approx-threshold-periodic}
      \mathcal R_0=\dfrac{\beta\eps}{(\mu+\eps)(\mu+\gamma)}+\dfrac{\beta\eps b^2/2}{4\pi^2+(2\mu+\eps+\gamma)^2}+o(b^2).
    \end{equation}

Set $\Lambda=\mu=2$, $\eps=1$, $\gamma=0.02$ and consider the following initial conditions $S_0=E_0=I_0=R_0=0.1$ (black lines). We assume that there is no loss of immunity and let $\eta=0$. To consider a periodic case, we begin by setting $b=0.1$, $\beta=5.9$ and $\varphi=0$ in~\eqref{eq:ProblemaEndemic}.

Using approximation~\eqref{eq:approx-threshold-periodic}, we have the estimate $\mathcal R_0 \approx 0,9741 < 1$ and we conclude that the disease goes to extinction. We can see this in the right-hand side of figure~\ref{fig_estimate}.
\begin{figure}[h!]
  \begin{minipage}[b]{.495\linewidth}
    \includegraphics[width=\linewidth]{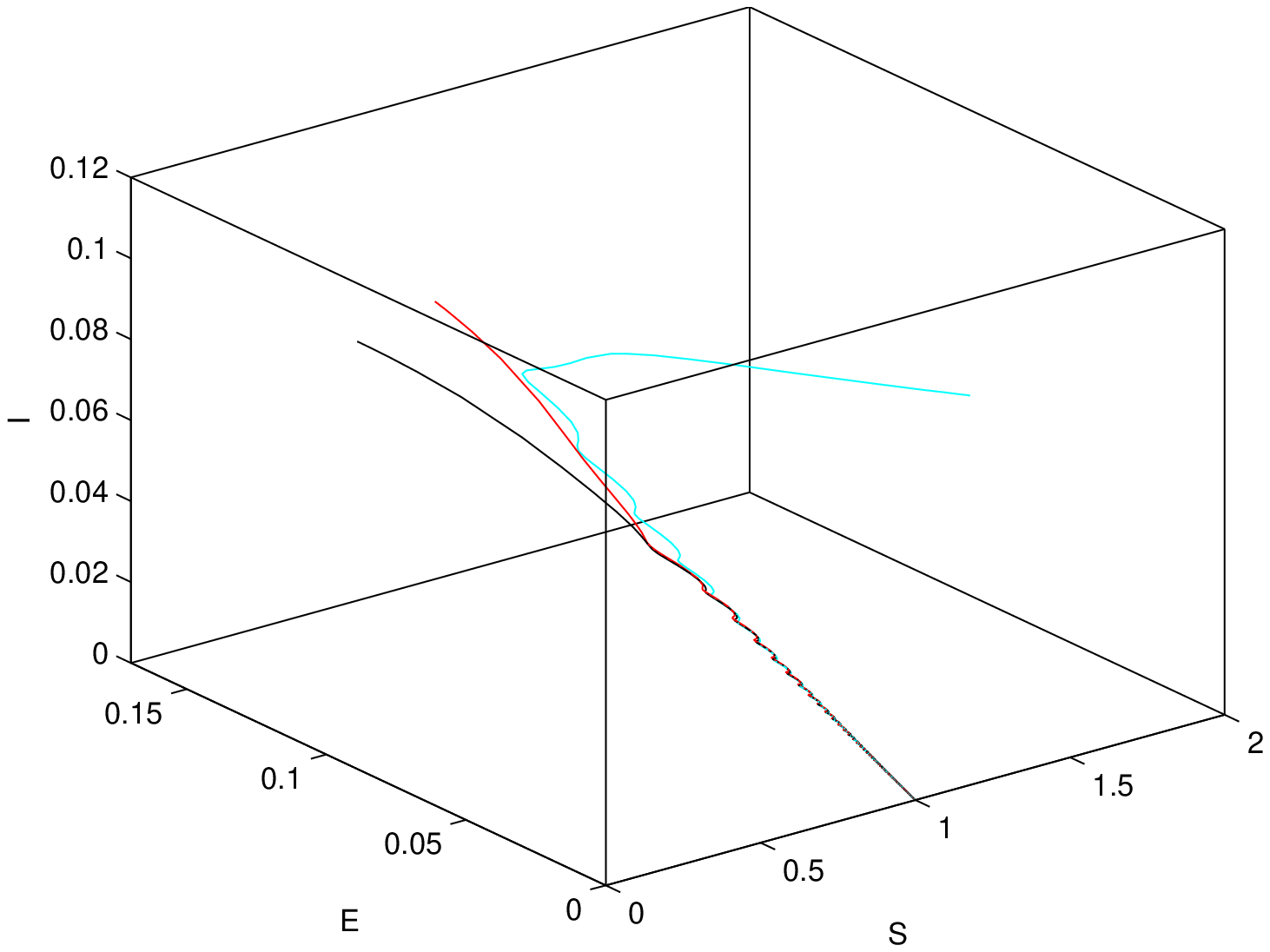}
  \end{minipage} \hfill
  \begin{minipage}[b]{.495\linewidth}
        \includegraphics[width=\linewidth]{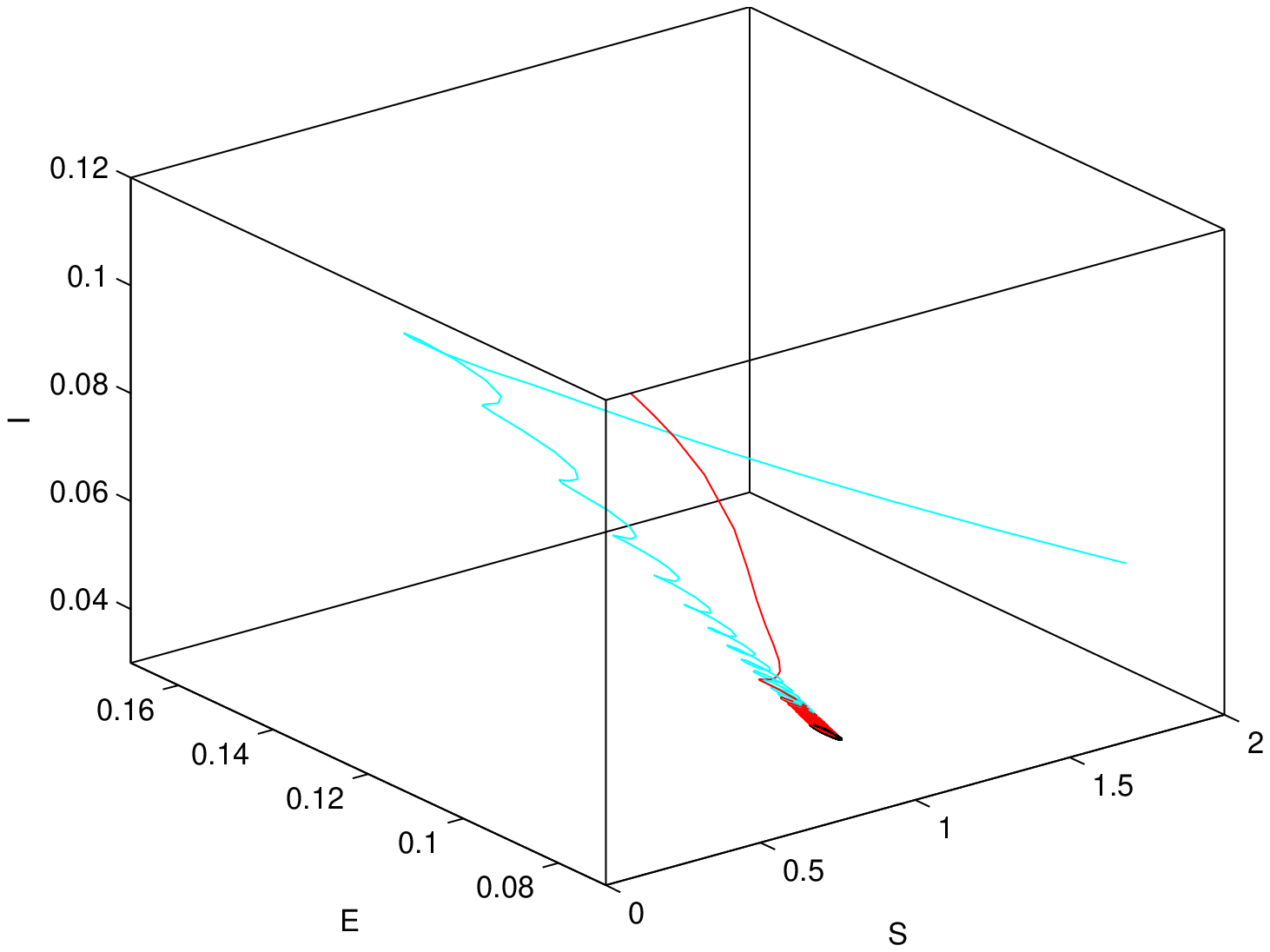}
  \end{minipage}
    \caption{Disease-free case and endemic case for $b=0.1$.}
      \label{fig_estimate}
\end{figure}

Changing $b$ to $0.6$, estimate~\eqref{eq:approx-threshold-periodic} gives $\mathcal R_0 \approx 0.9900 < 1$ an increased $\cR_0$ that still makes the disease go to extinction. In fact, a plot for this case can be seen on the left-hand side of figure~\ref{fig_Periodic} where we can see that all trajectories approach the disease-free equilibrium $e^*=(1,0,0,0)$ and thus that the disease goes to extinction.

On the right-hand side of figure~\ref{fig_estimate}, we let $b=0.1$ and $\beta=6.9$. We can see that the disease persists and that all trajectories approach an endemic periodic orbit. In this case, the approximate formula~\eqref{eq:approx-threshold-periodic}, gives $\mathcal R_0=1.13915>1$ and we also have $\beta^\ell\eps^\ell\Lambda^\ell/((\mu+\eps)^u(\mu+\gamma)^u\mu^u)=1.02475>1$. Both Corollary~\ref{cor:simple} and the main result in~\cite{Zhang-Liu-Teng-AM-2012} confirm the existence of an endemic periodic orbit.

If we increase the oscillations and set $b=0.6$, the approximate formula~\eqref{eq:approx-threshold-periodic}, gives $\mathcal R_0=1.15782>1$. In this case Corollary~\ref{cor:simple} still allows us to conclude that there is an endemic periodic orbit. This conclusion is not possible using the result in~\cite{Zhang-Liu-Teng-AM-2012} since in this case $\beta^\ell\eps^\ell\Lambda^\ell/((\mu+\eps)^u(\mu+\gamma)^u\mu^u)=0.455446<1$.
In the right-hand side of figure~\ref{fig_Periodic} we can see that the disease persists and that all trajectories approach an endemic periodic orbit. Note that the red and cyan lines correspond respectively to solutions with the following initial conditions: $S_0=0.08$, $E_0=0.07$, $I_0=0.12$, $R_0=0.13$ and $S_0=1.99$, $E_0=0.09$, $I_0=0.05$, $R_0=0.25$.

\begin{figure}[h!]
  \begin{minipage}[b]{.495\linewidth}
    \includegraphics[width=\linewidth]{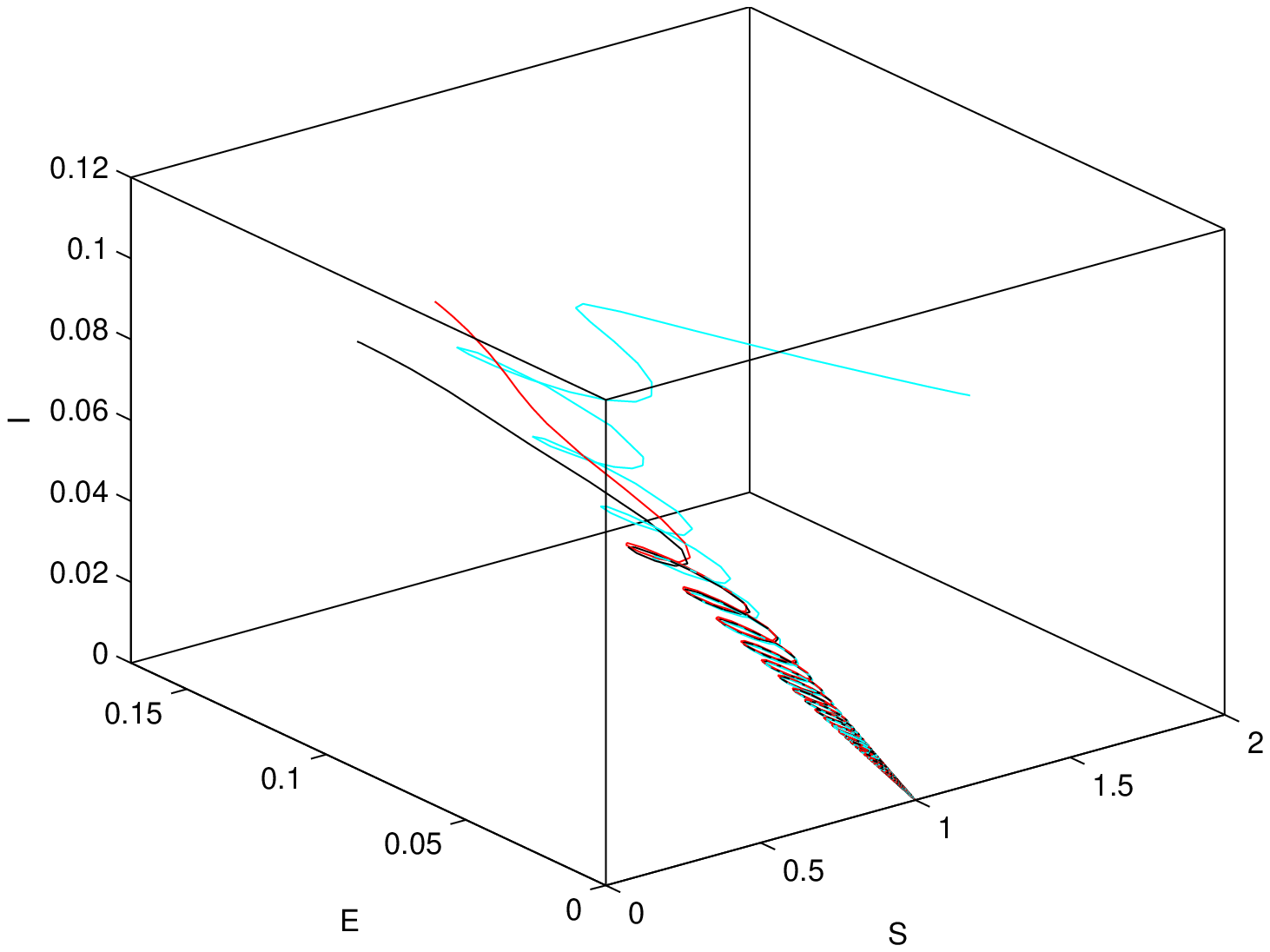}
  \end{minipage} \hfill
  \begin{minipage}[b]{.495\linewidth}
        \includegraphics[width=\linewidth]{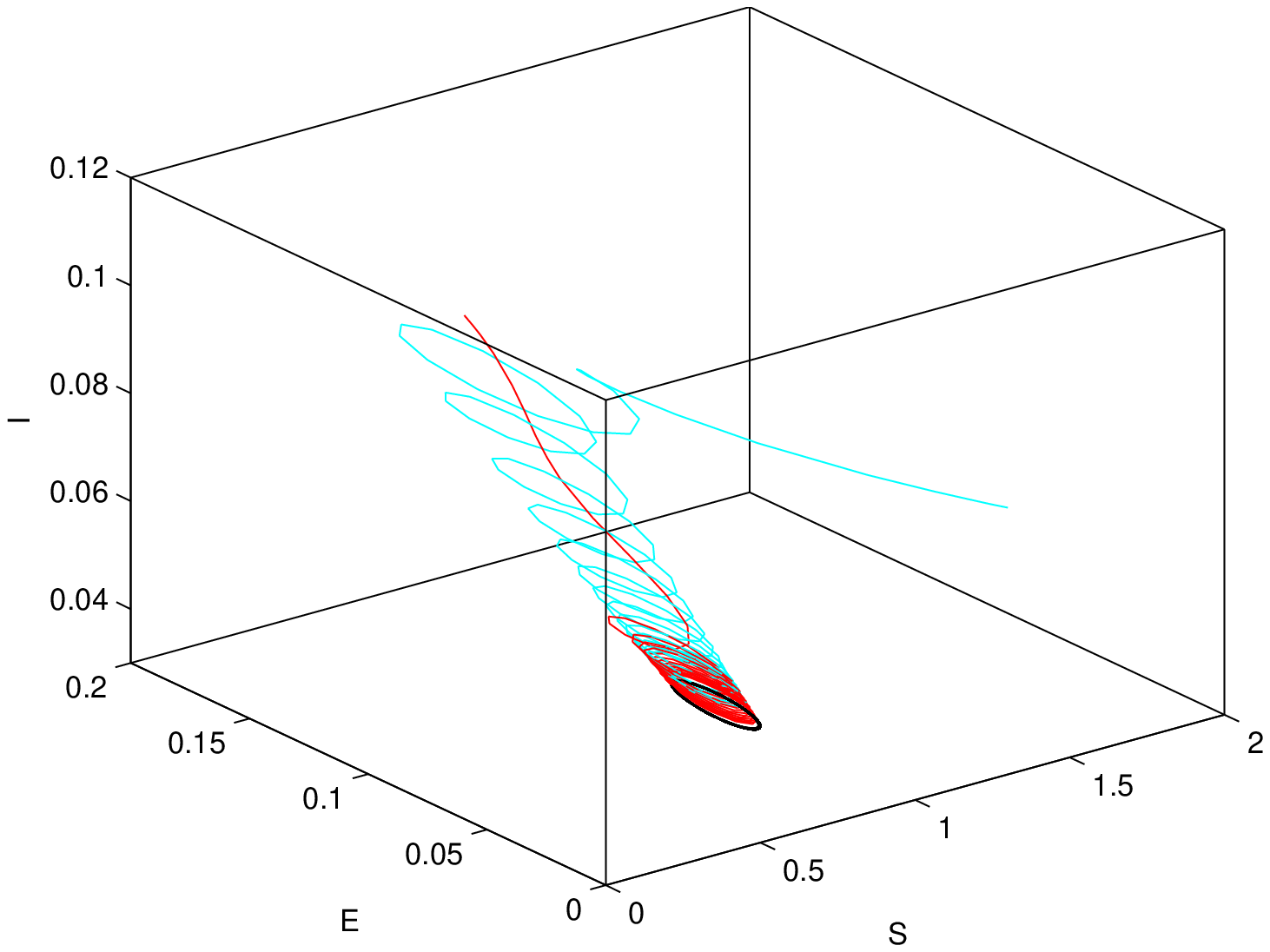}
  \end{minipage}
    \caption{Disease Free Case and Endemic Case for $b=0.6$.}
      \label{fig_Periodic}
\end{figure}

In figures~\ref{fig_Periodic_FREE} and~\ref{fig_Periodic_ENDE} we present the trajectories of the infectives and the susceptibles for the situations described in figure~\ref{fig_Periodic}.

\begin{figure}[h!]
  \begin{minipage}[b]{.495\linewidth}
    \includegraphics[width=\linewidth]{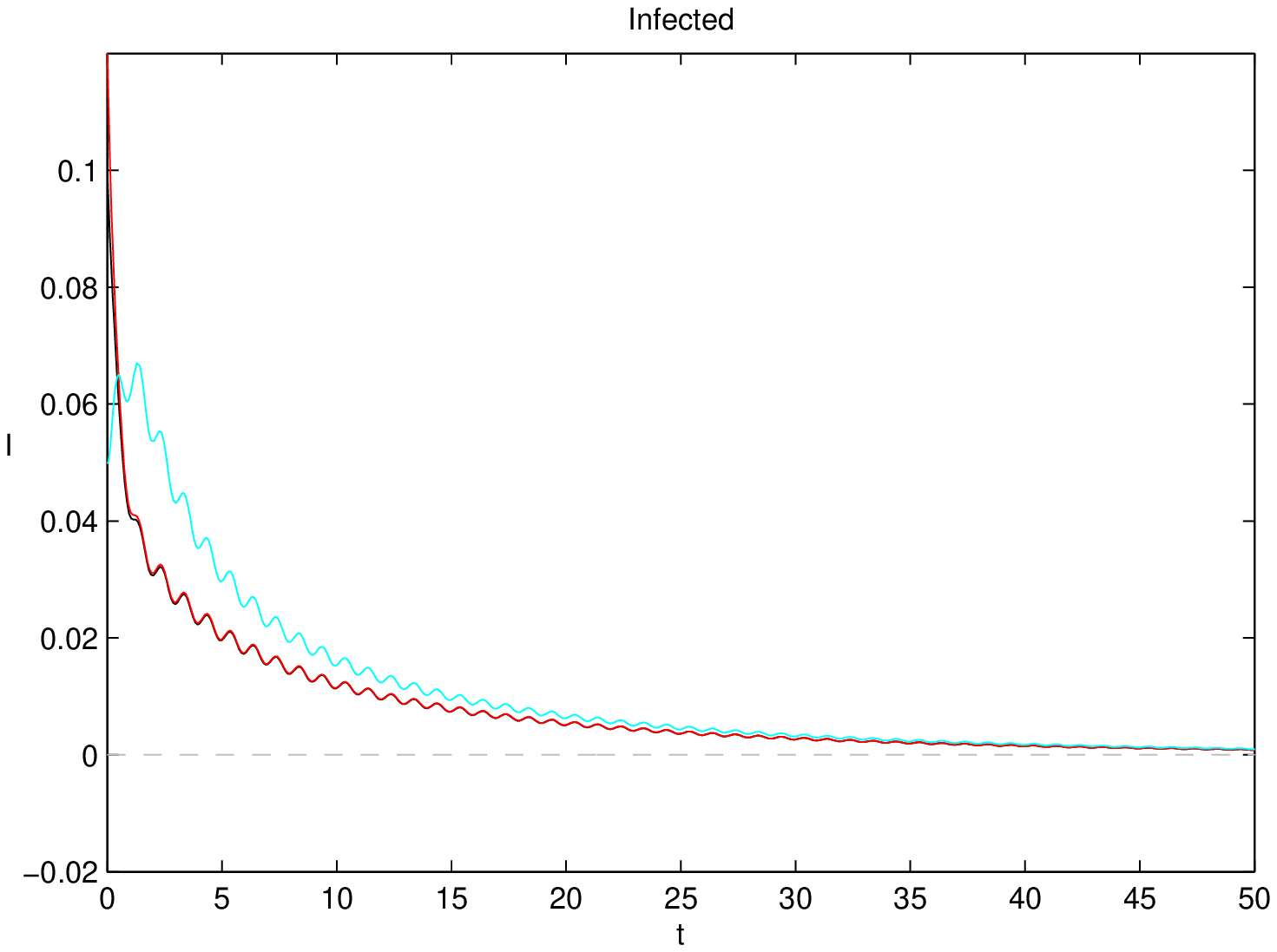}
  \end{minipage} \hfill
  \begin{minipage}[b]{.495\linewidth}
        \includegraphics[width=\linewidth]{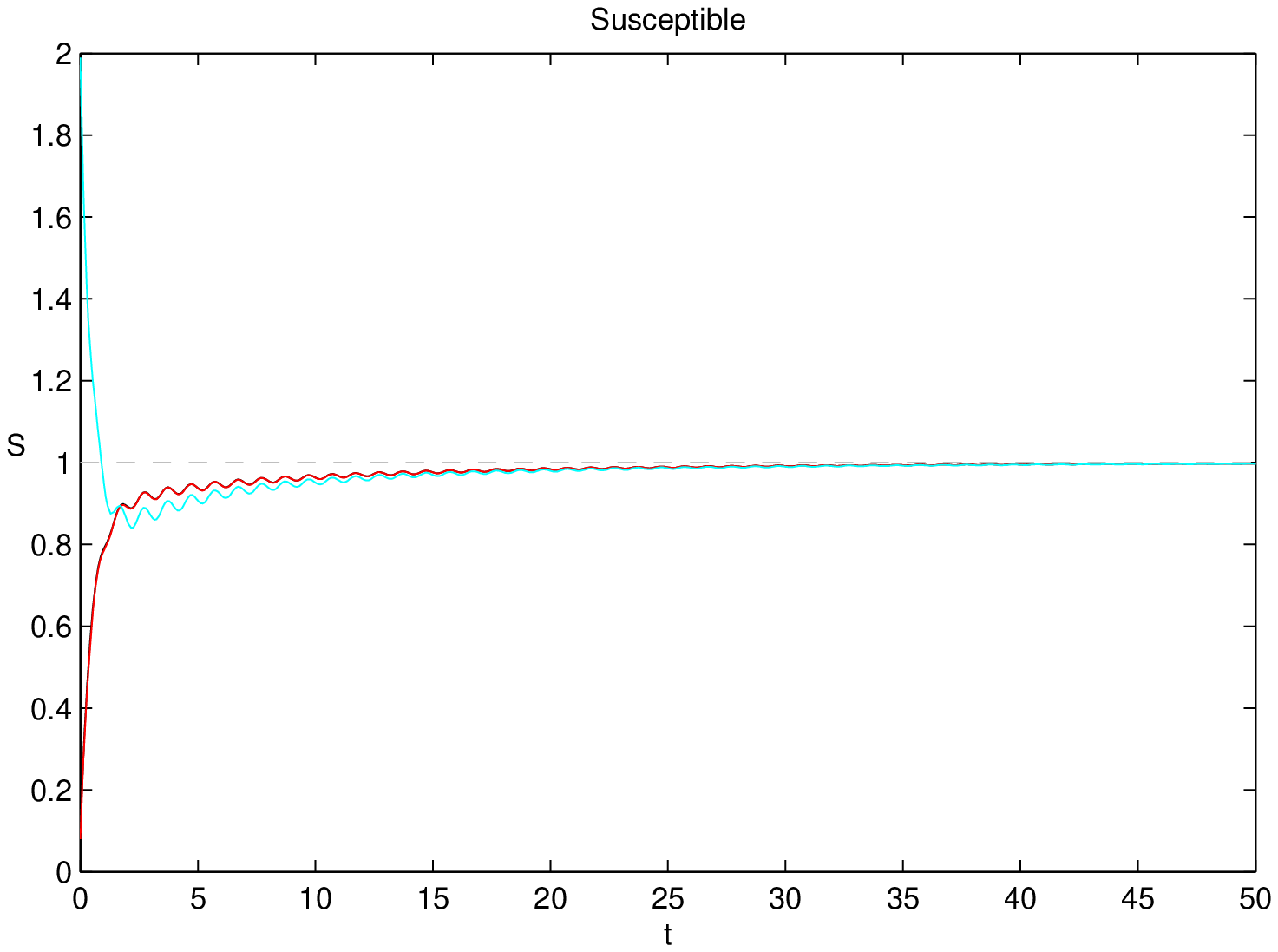}
  \end{minipage}
    \caption{Disease Free Case.}
      \label{fig_Periodic_FREE}
\end{figure}
\begin{figure}[h!]
  \begin{minipage}[b]{.495\linewidth}
    \includegraphics[width=\linewidth]{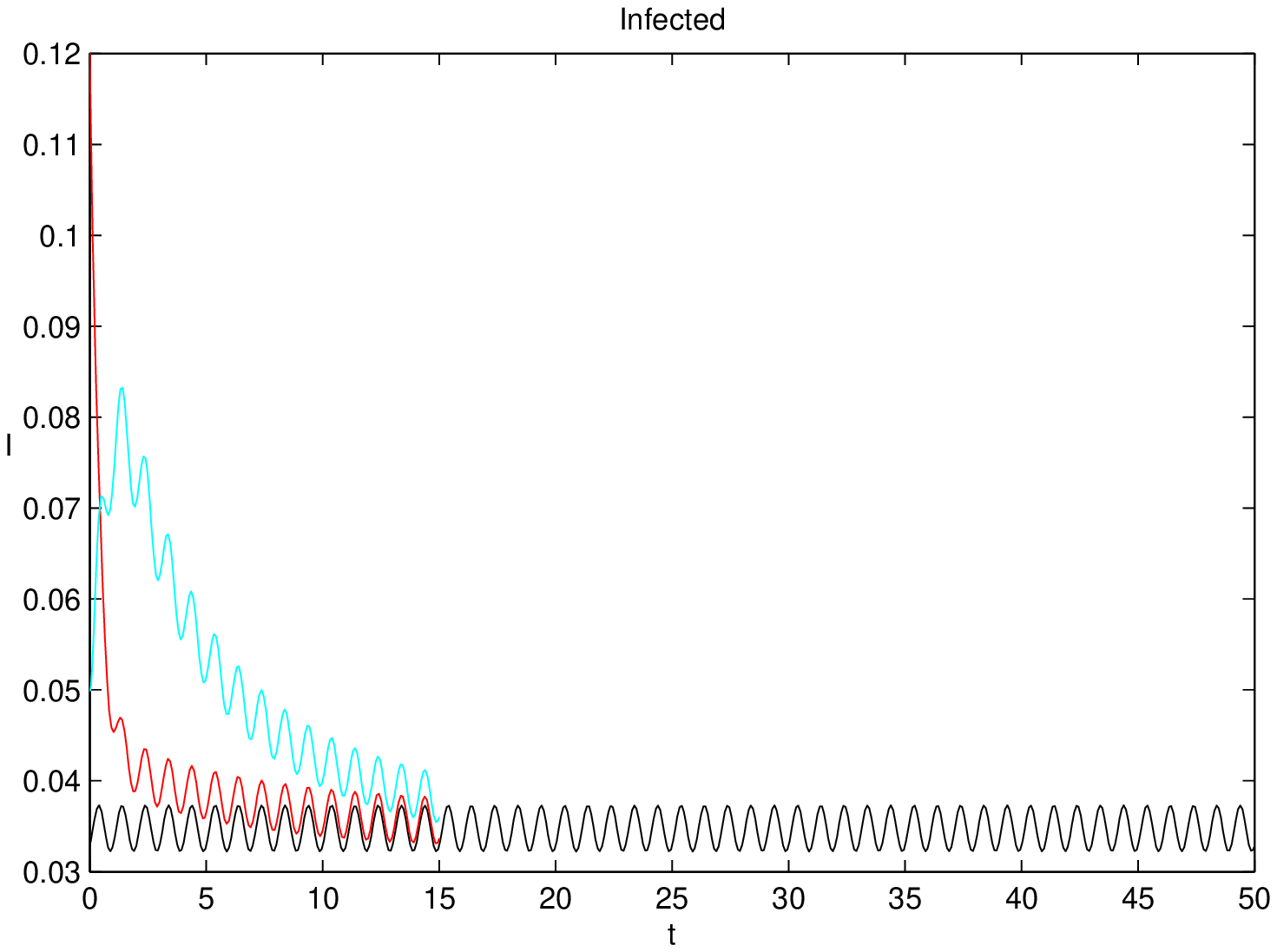}
  \end{minipage} \hfill
  \begin{minipage}[b]{.495\linewidth}
        \includegraphics[width=\linewidth]{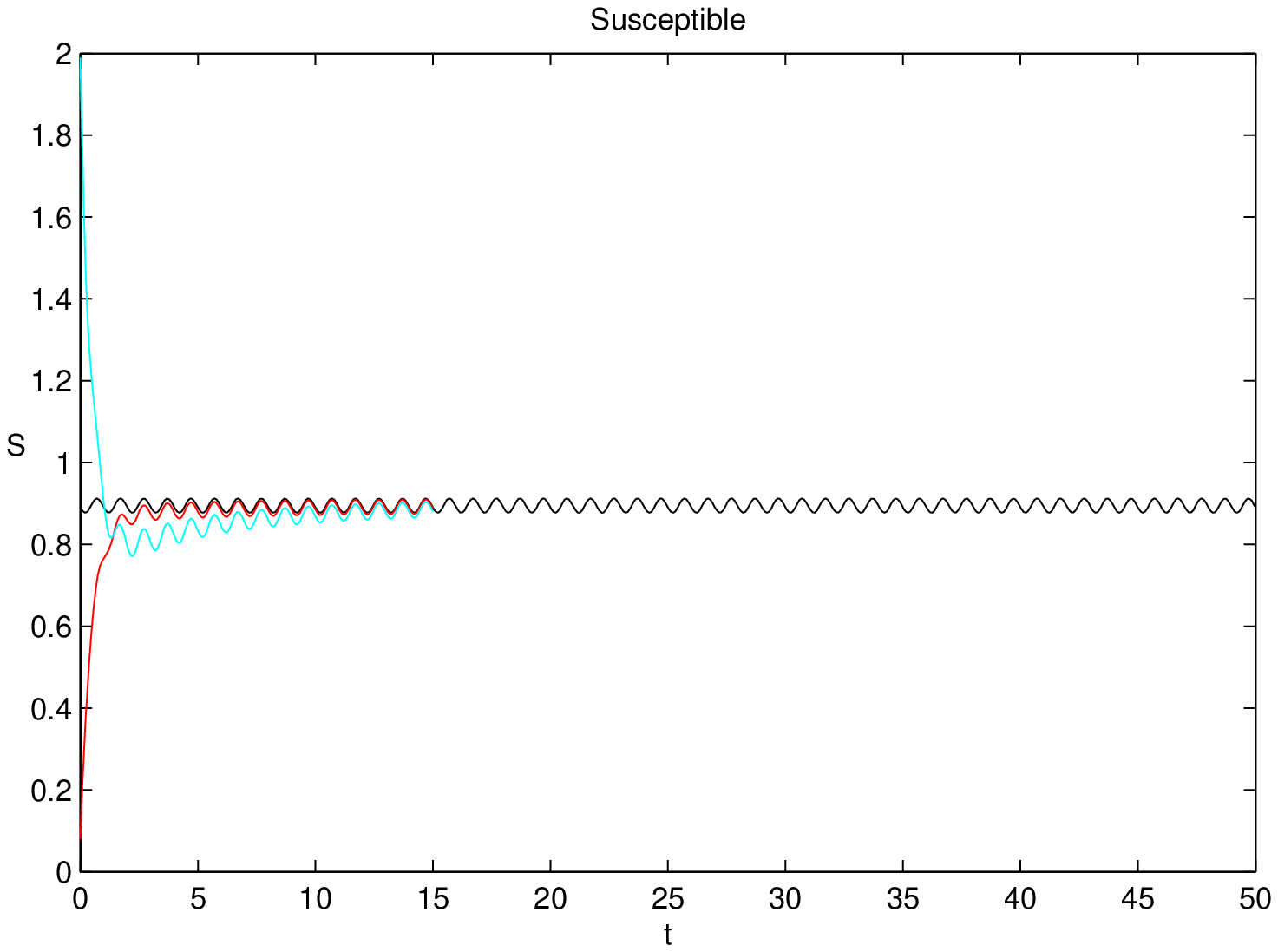}
  \end{minipage}
    \caption{Endemic Case.}
      \label{fig_Periodic_ENDE}
\end{figure}

\end{document}